\theoremstyle{plain}
	\newtheorem*{Theorem*}{Theorem}
	\newtheorem*{Lemma*}{Lemma}
	\newtheorem*{Proposition*}{Proposition}
	\newtheorem*{Satz*}{Satz}
	\newtheorem*{Corollary*}{\corollaryname}
	\newtheorem*{Korollar*}{\corollaryname}
\theoremstyle{definition}
	\newtheorem*{Algorithm*}{\algorithmname}
	\newtheorem*{Definition*}{Definition}
	\newtheorem*{Conjecture*}{\conjecturename}
	\newtheorem*{Vermutung*}{\conjecturename}
	\newtheorem*{Assumption*}{\assumptionname}
	\newtheorem*{Annahme*}{\assumptionname}
	\newtheorem*{Conclusion*}{\conclusionname}
	\newtheorem*{Folgerung*}{\conclusionname}
	\newtheorem*{Hilfssatz*}{Hilfssatz}
	\newtheorem*{Feststellung*}{Feststellung}
	\newtheorem*{Example*}{Example}
	\newtheorem*{Beispiel*}{\examplename}
	\newtheorem*{Hypothesis*}{\hypothesisname}
	\newtheorem*{Hypothese*}{\hypothesisname}
	\newtheorem*{Property*}{\propertyname}
	\newtheorem*{Eigenschaft*}{\propertyname}
	\newtheorem*{Remark*}{\remarkname}
	\newtheorem*{Aussage*}{\remarkname}
	\newtheorem*{Aufgabe*}{\exercisename}
	\newtheorem*{Uebung*}{\exercisename}
	\newtheorem*{Exercise*}{\exercisename}
\begin{document}

% The file intro.tex should consist of:
%
%   \begin{Preface}
%     ...the preface...
%   \end{Preface}
%   \tableofcontents

%\include{intro}

\title{Ensemble filter techniques for intermittent 
  data assimilation - a survey}

\author{Sebastian Reich \\Department of Mathematics, University
    of Potsdam \\ \\ and \\ \\ Colin Cotter\\
Department of Aeronautics, Imperial College London}

\maketitle

\begin{abstract}
This survey paper is written with the intention of giving a
mathematical introduction to filtering techniques for intermittent
data assimilation, and to survey some recent advances in the
field. The paper is divided into three parts.  The first part
introduces Bayesian statistics and its application to statistical
inference and estimation. Basic aspects of Markov processes, as they typically arise
from scientific models in the form of stochastic differential and/or
difference equations, are covered in the second part.  The third and
final part describes the filtering approach to estimation of model
states by assimilation of observational data into scientific
models. While most of the material is of survey type, very recent
advances in the field of nonlinear data assimilation covered in
this paper include a discussion of Bayesian inference in the context
of optimal transportation and coupling of random variables, as well as
a discussion of recent advances in ensemble transform filters.
References and sources for further reading material will be listed at
the end of each section.
\end{abstract}

% The file chapterXY.tex contains Chapter XY.

%%%%%%%%%%%%%%%%%%%%%%%%%%%%%%%%%%%

\section{Introduction to Bayesian statistics}

In this section, we summarize the Bayesian approach to statistical inference and
estimation, in which probability is interpreted as a measure of
uncertainty (of the system state, for example). 
Contrary to closely related inverse problem formulations, all variables involved are
considered to be uncertain, and are described as random
variables. Furthermore uncertainty is only discussed in the context of available
information, requiring the computation of conditional probabilities;
Bayes' formula is used for statistical inference. We start with a
short introduction to random variables.

\subsection{Preliminaries}

We start with a \emph{sample space} $\Omega$ which characterizes all
possible outcomes of an experiment. An \emph{event} is a subset of
$\Omega$ and we assume that the set ${\cal F}$ of all events forms a
\emph{$\sigma$-algebra} (\emph{i.e.}, $\mathcal{F}$ is non-empty, and
closed over complementation and countable unions). For example,
suppose that $\Omega = \mathbb{R}$. Then events can be defined by
taking all possible countable unions and complements of intervals
$(a,b] \subset \mathbb{R}$; these are known as the \emph{Borel sets}.

\begin{Definition*}[Probability measure] A probability measure is a function
  $\mathbb{P}:{\cal F} \to [0,1]$ with the following properties:
\begin{itemize}
\item[(i)] \emph{Total probability equals one:} $\mathbb{P}(\Omega) = 1$.
\item[(ii)] \emph{Probability is additive for independent events:} If
  ${A_1,A_2,\ldots,A_n,\ldots}$ is a finite or countable collection of
  events $A_i \in {\cal F}$ and $A_i \cap A_j = \emptyset$ for $i\not=
  j$, then
\[
\mathbb{P}(\cup_i A_i) = \sum_i \mathbb{P}(A_i)
\]
\end{itemize}
The triple $(\Omega,{\cal F},\mathbb{P})$ is called a \emph{probability
space}.
\end{Definition*}

\begin{Definition*}[Random variable]
A function $X:\Omega \to \mathbb{R}$ is called a (univariate) \emph{random
variable} if 
\[
\{\omega\in\Omega : X(\omega) \le x\} \in {\cal F}
\]
for all $x \in \mathbb{R}$. The (cumulative) \emph{probability distribution
function} of $X$ is given by
\[
F_X(x) = \mathbb{P}(\{\omega \in \Omega: X(\omega) \le x\}).
\]
The cumulative probability distribution function implies a probability
measure on $\mathbb{R}$ which we denote by $\mu_X$.
\end{Definition*}

Often, when working with a random variable $X$, the underlying
probability space $(\Omega,{\cal F},\mathbb{P})$ is not emphasised;
one typically only specifies the \emph{target space} ${\cal X} =
\mathbb{R}$ and the probability distribution or \emph{measure} $\mu_X$
on ${\cal X}$. We then say that $\mu_X$ is the \emph{law} of $X$ and write
$X\sim \mu_X$. A probability measure $\mu_X$ introduces an integral
over ${\cal X}$ and
\[
\mathbb{E}_X[f] = \int_{\cal X} f(x) \mu_X({\rm d}x)
\]
is called the \emph{expectation value} of a function
$f:\mathbb{R}\to\mathbb{R} $ ($f$ is called a \emph{measurable
  function} where the integral exists). We also use the notation
$\mbox{law}(X) = \mu_X$ to indicate that $\mu_X$ is the probability
measure for a random variable $X$. Two important choices for $f$ are
$f(x) = x$, which leads to the mean $\bar x = \mathbb{E}_X[x]$ of $X$, and
$f(x) = (x-\bar x)^2$, which leads to the variance $\sigma^2 =
\mathbb{E}_X[(x-\bar x)^2]$ of $X$.

Univariate random variables naturally extend to the multivariate case,
\emph{i.e.}~${\cal X} = \mathbb{R}^N$, $N>1$. A probability measure
$\mu_X$ on ${\cal X}$ is called \emph{absolutely continuous} (with
respect to the standard Lebesgue integral ${\rm d}x$ on $\mathbb{R}^N$)
if there exists a \emph{probability density function} (PDF)
$\pi_X:{\cal X}\to \mathbb{R}$ with $\pi_X(x) \ge 0$, and
\[
\mathbb{E}_X[f] = \int_{\cal X} f(x) \mu_X({\rm d}x) = \int_{\mathbb{R}^N} f(x) \pi_X(x) {\rm d} x,
\]
for all measurable functions $f$. The shorthand $\mu_X({\rm d} x) =
\pi_X {\rm d}x$ is often adopted.  The implication is that
one can, for all practical purposes, work within the classical Riemann
integral framework and does not need to resort to Lebesgue integration. 
Again we can define the mean $\bar x \in
\mathbb{R}^N$ of a multivariate random variable and its covariance matrix
\[
P = \mathbb{E}_X [ (x-\bar x)(x-\bar x)^{\rm T}] \in \mathbb{R}^{N\times N} .
\]
Here $a^{\rm T}$ denotes the transpose of a vector $a$.
We now discuss a few standard distributions.

\begin{Example*}[Gaussian distribution]
  We use the notation $X\sim {\rm N}(m,\sigma^2)$ to denote a
  univariate Gaussian random variable with mean $\bar x$ and variance
  $\sigma^2$, with PDF given by
\[
\pi_X (x) = \frac{1}{\sqrt{2\pi}\sigma}e^{-\frac{1}{2\sigma^2}(x-\bar x)^2} ,
\]
$x \in \mathbb{R}$. In the multivariate case, we use the notation
$X\sim {\rm N}(\bar x,\Sigma)$ to denote a Gaussian random variable with
PDF given by
\[
\pi_X(x) = \frac{1}{(2\pi)^{N/2}|\Sigma|^{1/2}}\exp \left( -\frac{1}{2}
(x-\bar x)^{\rm T} \Sigma^{-1} (x-\bar x) \right) ,
\]
$x \in \mathbb{R}^N$. 
\end{Example*}

\begin{Example*}[Laplace distribution and Gaussian mixtures]
  The univariate Laplace distribution has PDF
\[
\pi_X (x) = \frac{\lambda}{2} e^{-\lambda |x|},
\]
$x\in \mathbb{R}$. This may be rewritten as 
\[
\pi_X(x) = \int_0^\infty
\frac{1}{\sqrt{2\pi}\sigma} e^{-x^2/(2\sigma^2)}
\frac{\lambda^2}{2} e^{-\lambda^2 \sigma/2} {\rm d}\sigma,
\]
which is a weighted Gaussian PDF with mean zero and variance
$\sigma^2$, integrated over $\sigma$. Replacing the integral by a
Riemann sum over a sequence of quadrature points
$\{\sigma_j\}_{j=1}^J$, we obtain
\[
\pi_X(x) \approx  \sum_{j=1}^J \alpha_j \frac{1}{\sqrt{2\pi} \sigma_j}
e^{-x^2/(2\sigma_j^2)}, \qquad \alpha_j 
\propto \frac{\lambda^2}{2}
e^{-\lambda^2 \sigma_j/2} (\sigma_j-\sigma_{j-1})
\]
and the constant of proportionality is chosen such that the weights
$\alpha_j$ sum to one.
This is an example of a \emph{Gaussian mixture} distribution, namely
a weighted sum of Gaussians. In this case, the Gaussians are all 
centred on $x=0$; the most general form of a Gaussian mixture is
\[
\pi_X(x) = \sum_{j=1}^J \alpha_j \frac{1}{\sqrt{2\pi} \sigma_j}
e^{-(x-x_j)^2/(2\sigma_j^2)},
\]
with weights $\alpha_j > 0$ subject to $\sum_{j=1}^J \alpha_j = 1$,
and locations $-\infty< x_j<\infty$. Univariate Gaussian mixtures
generalize to mixtures of multi-variate Gaussians in the obvious
manner.
\end{Example*}

\begin{Example*}[Point distribution]
As a final example, we consider the point measure $\mu_{x_0}$ defined by
\[
\int_{\cal X} f(x) \mu_{x_0}({\rm d} x) =  f(x_0).
\]
Using the Dirac delta notation $\delta (\cdot)$ this can be formally
written as $\mu_{x_0}({\rm d}x) = \delta (x - x_0){\rm d}x$. The associated
random variable $X$ has the certain outcome $X(\omega) = x_0$ for
almost all $\omega \in \Omega$. One can call such a random variable
\emph{deterministic}, and write $X = x_0$ for short.  Note that the
point measure is not absolutely continuous with respect to the
Lebesgue measure, \emph{i.e.}, there is no corresponding probability
density function.
\end{Example*}

We now briefly discuss pairs of random variables $X_1$ and $X_2$ over
the same target space ${\cal X}$. Formally, we can treat them as a
single random variable $Z = (X_1,X_2)$ over ${\cal Z} = {\cal X}
\times {\cal X}$ with a \emph{joint distribution} $\mu_{X_1
  X_2}(x_1,x_2) = \mu_Z(z)$.

\begin{Definition*}[Marginals, independence, conditional probability
  distributions]
 Let $X_1$ and $X_2$ denote two random variables on
  ${\cal X}$ with joint PDF $\pi_{X_1 X_2}(x_1,x_2)$.  The two PDFs
\[
\pi_{X_1}(x_1) = \int_{\cal X} \pi_{X_1 X_2}(x_1,x_2) {\rm d}x_2
\]
and
\[
\pi_{X_2}(x_2) = \int_{\cal X} \pi_{X_1 X_2}(x_1,x_2) {\rm
  d}x_1,
\]
respectively, are called the \emph{marginal PDFs}, \emph{i.e.}
$X_1 \sim \pi_{X_1}$ and $X_2 \sim \pi_{X_2}$. The two random variables are called
\emph{independent} if
\[
\pi_{X_1 X_2}(x_1,x_2) = \pi_{X_1}(x_1) \,\pi_{X_2}(x_2).
\]
We also introduce the \emph{conditional PDFs}
\[
\pi_{X_1}(x_1|x_2) = \frac{\pi_{X_1 X_2}(x_1,x_2)}{\pi_{X_2}(x_2)}
\]
and
\[
\pi_{X_2}(x_2|x_1) = \frac{\pi_{X_1 X_2}(x_1,x_2)}{\pi_{X_1}(x_1)}.
\]
\end{Definition*}

\begin{Example*}[Gaussian joint distributions]
  A Gaussian joint distribution $\pi_{XY}(x,y)$, $x,y\in \mathbb{R}$,
  with mean $(\bar x,\bar y)$ and covariance matrix
\[
\Sigma = \left[ \begin{array}{cc} \sigma_{xx}^2 & \sigma_{xy}^2 \\
    \sigma_{yx}^2 & \sigma_{yy}^2 \end{array} \right]
\]
leads to a Gaussian conditional distribution 
\begin{equation} \label{Gcond1}
\pi_X(x|y) = \frac{1}{\sqrt{2\pi} \sigma_c} e^{-(x-\bar x_c)^2/(2\sigma_c^2)},
\end{equation}
with conditional mean
\[
\bar x_c = \bar x + \sigma_{xy}^2 \sigma_{yy}^{-2} (y - \bar y)
\]
and conditional variance
\[
\sigma_c^2 = \sigma_{xx}^2 - \sigma_{xy}^2 \sigma_{yy}^{-2}
\sigma_{yx}^2.
\]
For given $y$, we define $X|y$ as the random variable with conditional
probability distribution $\pi_X(x|y)$, and write $X|y \sim {\rm
  N}(\bar x_c,\sigma_c^2)$.
\end{Example*}

%%%%%%%%%%%%%%%%%%%%%%%%%%%%%%%%%%%%%%%%%%

\subsection{Bayesian inference} \label{sec:Bayes}

We start this section by considering transformations of random
variables.  A typical scenario is the following one. Given a pair of
independent random variables $\Xi$ with values in ${\cal Y} =
\mathbb{R}^K$ and $X$ with values in ${\cal X} = \mathbb{R}^N$
together with a continuous map $h:\mathbb{R}^N \to \mathbb{R}^K$, we
define a new random variable
\begin{equation} \label{eq:Ymodel}
Y = h(X) + \Xi .
\end{equation}
The map $h$ is called the \emph{observation
  operator}, which yields observed quantities given a particular
value $x$ of the \emph{state variable} $X$, and $\Xi$ represents
\emph{measurement errors}.

\begin{Theorem*}[PDF for transformed random variable]
Assume that both $X$ and $\Xi$ are absolutely continuous, then $Y$ is
absolutely continuous with PDF
\begin{equation}
\label{pi_y}
\pi_Y(y) = \int_{\cal X} \pi_{\Xi}(y-h(x)) \pi_X(x) {\rm d}x .
\end{equation}
If $X$ is a deterministic variable, i.e.~$X = x_0$ for an appropriate
$x_0 \in \mathbb{R}^N$, then the PDF simplifies to
\[
\pi_Y(y) = \pi_{\Xi}(y-h(x_0)) .
\]
\end{Theorem*}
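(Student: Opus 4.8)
The plan is to characterise the law of $Y$ \emph{weakly}, by computing $\mathbb{E}[f(Y)]$ for an arbitrary bounded measurable test function $f:{\cal Y}\to\mathbb{R}$ and reading off the density from the resulting integral. Since this simultaneously establishes absolute continuity and identifies $\pi_Y$, it is cleaner than manipulating cumulative distribution functions directly in $\mathbb{R}^K$.

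First I would use the independence of $X$ and $\Xi$ to write their joint density as the product $\pi_X(x)\,\pi_\Xi(\xi)$, so that
\[
\mathbb{E}[f(Y)] = \mathbb{E}[f(h(X)+\Xi)] = \int_{\cal X}\int_{\cal Y} f(h(x)+\xi)\,\pi_\Xi(\xi)\,\pi_X(x)\,{\rm d}\xi\,{\rm d}x .
\]
Next, for each fixed $x$ I would perform the translation $\xi \mapsto y = h(x)+\xi$ in the inner integral; this is a change of variables on $\mathbb{R}^K$ with unit Jacobian, giving $\int_{\cal Y} f(y)\,\pi_\Xi(y-h(x))\,{\rm d}y$. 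Continuity of $h$ guarantees measurability of $(x,\xi)\mapsto f(h(x)+\xi)$, so the integrals are well defined.

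Then I would exchange the order of integration. Taking $f\ge 0$ first, Tonelli's theorem applies because the integrand is nonnegative, yielding
\[
\mathbb{E}[f(Y)] = \int_{\cal Y} f(y)\left[\,\int_{\cal X}\pi_\Xi(y-h(x))\,\pi_X(x)\,{\rm d}x\,\right]{\rm d}y ;
\]
the bracketed function is exactly the candidate $\pi_Y$, and decomposing a general $f$ into positive and negative parts extends the identity to all bounded measurable $f$. Since $\mathbb{E}[f(Y)] = \int f\,{\rm d}\mu_Y$ holds against every such $f$ with $\mu_Y({\rm d}y) = \pi_Y(y)\,{\rm d}y$, this proves that $Y$ is absolutely continuous with the stated density; nonnegativity of $\pi_Y$ is immediate, and $\int \pi_Y = 1$ follows by taking $f\equiv 1$.

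The deterministic case is then either the formal specialisation $\pi_X({\rm d}x)=\delta(x-x_0)\,{\rm d}x$, which collapses the integral to $\pi_\Xi(y-h(x_0))$, or---more honestly, since the point measure has no density---a one-line direct argument: $Y = h(x_0)+\Xi$ is a deterministic translate of $\Xi$, so $\pi_Y(\cdot) = \pi_\Xi(\,\cdot - h(x_0))$. The only genuine technical point is the Fubini/Tonelli interchange, which I expect to be the main thing to justify carefully; but the nonnegativity of the PDFs makes Tonelli directly applicable, so it is not a real obstacle, and no regularity beyond continuity of $h$ is required.
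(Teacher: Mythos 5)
Your proof is correct and follows essentially the same route as the paper: the paper identifies the conditional density $\pi_Y(y|x)=\pi_\Xi(y-h(x))$ (via the deterministic case $Y-h(x_0)=\Xi$), forms the joint density $\pi_Y(y|x)\pi_X(x)$ using independence, and marginalizes over $x$ --- which is exactly what your test-function computation with the translation change of variables and Tonelli interchange formalizes. The only differences are organizational (the paper treats the deterministic case first and uses it to justify the conditional density, whereas you derive the general case directly and specialize afterwards) and that your version spells out the measure-theoretic justification the paper leaves implicit.
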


\noindent
\begin{proof}
  We start with $X=x_0$. Then $Y-h(x_0) = \Xi$ which immediately
  implies the stated result. In the general case, consider the 
  conditional probability
  \[
  \pi_Y(y|x_0) = \pi_{\Xi}(y-h(x_0)).
  \]
  Equation \eqref{pi_y} then follows from the implied joint distribution
  \[
  \pi_{XY}(x,y) = \pi_Y(y|x) \pi_X(x)
  \]
and subsequent marginalization, \emph{i.e.}
\[
\pi_Y(y) = 
\int_{\cal X} \pi_{XY}(y,x)  {\rm d}x =
\int_{\cal X} \pi_Y(y|x) \pi_X(x) {\rm d}x .
\]
\end{proof}
The problem of predicting the distribution $\pi_Y$ of $Y$ given a
particular configuration of the state variable $X= x_0$ is called the
\emph{forward problem}. The problem of predicting the distribution 
of the state variable $X$ given an \emph{observation} $Y=y_0$ gives rise
to an \emph{inference problem}, which is defined more formally as follows.

\begin{Definition*}[Bayesian inference]
Given a particular value $y_0 \in \mathbb{R}^K$, we consider the
associated conditional PDF $\pi_X(x|y_0)$ for
the random variable $X$. From
\[
\pi_{XY}(x,y) = \pi_Y(y|x) \pi_X(x) = \pi_X(x|y) \pi_Y(y)
\]
we obtain \emph{Bayes' formula}
\begin{equation} \label{eq:Bayes}
\pi_{X}(x|y_0) = \frac{\pi_X(y_0|x) \pi_X(x)}{\pi_Y(y_0)}
\end{equation}
The object of \emph{Bayesian inference} is to obtain
$\pi_{X}(x|y_0)$.
\end{Definition*}

Since $\pi_Y(y_0) \not= 0$ is a constant, Equation \eqref{eq:Bayes}
can be written as
\[
\pi_{X}(x|y_0) \propto \pi_X(y_0|x) \pi_X(x) = \pi_\Xi (y_0-h(x))
\pi_X(x) ,
\]
where the constant of proportionality depends only on $y_0$.  We
denote $\pi_X(x)$ the \emph{prior PDF} of the random variable $X$ and
$\pi_X(x|y_0)$ the \emph{posterior PDF}. The function $\pi(y_0|x)$ is
called the \emph{likelihood function}.
 
Having obtained a posterior PDF $\pi_X(x|y_0)$, it is often necessary to provide
an estimate of a ``most likely'' value of $x$ conditioned on $y_0$. 
Bayesian estimators for $x$ are defined as follows.

\begin{Definition*}[Bayesian estimators] Given a posterior PDF
  $\pi_X(x|y_0)$ we define a \emph{Bayesian estimator} $\hat x \in
  {\cal X}$ by
\[
\hat x = \mbox{arg min}_{x' \in {\cal X}} \int {\rm L}(x',x) \pi_X(x|y_0){\rm d}x
\]
where ${\rm L} (x',x)$ is an appropriate loss function. Popular
choices include the \emph{maximum a posteriori (MAP) estimator} with
$\hat x$ corresponding to the modal value of $\pi_X(x|y_0)$. The MAP
estimator formally corresponds to the loss function ${\rm L}(x',x) =
{\rm 1}_{\{x'\not = x\}}$. The \emph{posterior median estimator}
corresponds to ${\rm L}(x',x) = \|x'-x\|$ while the \emph{minimum mean
  square error estimator} (or \emph{conditional mean estimator})
\[
\hat x = \int_{\cal X} x\pi_X(x|y_0) {\rm d}x 
\]
 results from ${\rm L}(x',x) = \|x'-x\|^2$.
\end{Definition*}

We now consider an important example for which the posterior can be
computed analytically.

\begin{Example*}[Bayes' formula for Gaussian distributions]
  Consider the case of a scalar observation, \emph{i.e.}~$K=1$, 
  with $\Xi\sim N(0,\sigma_{rr}^2)$.
  Then
  \[
  \pi_\Xi(h(x)-y) =  \frac{1}{\sqrt{2\pi} \sigma_{rr}}
  e^{-\frac{1}{2\sigma_{rr}^2}\left(h(x)-y\right)^2} .
  \]
  We also assume that $X\sim {\rm N}(\bar x,P)$ and that $h(x) =
  Hx$. Then the posterior distribution of $X$ given $y=y_0$ 
  is also Gaussian with mean
  \[
  \bar x_c = \bar x - PH^{\rm T}(HPH^{\rm T} + \sigma_{rr}^2)^{-1}(H\bar x - y_0)
  \]
  and covariance matrix
  \[
  P_c = P - PH^{\rm T}(HPH^{\rm T} + \sigma_{rr}^2)^{-1} HP.
  \]
  These are the famous Kalman update formulas which follow from the
  fact that the product of two Gaussian distributions is also
  Gaussian, where the variance of $Y = HX + \Sigma$ is given by
  \[
  \sigma_{yy}^2 = HPH^{\rm T} + \sigma_{rr}^2 
  \]
  and the vector of covariances between $x\in \mathbb{R}^N$ and $y=Hx
  \in \mathbb{R}$ is given by $PH^{\rm T}$. For Gaussian random
  variables, the MAP, posterior median, and minimum mean square error
  estimators coincide and are given by $\bar x_c$. The case of
  vector-valued observations will be discussed in Section
  \ref{s:enkf}. Finally note that $\bar x_c$ solves the minimization
  problem
\[
\bar x_c = \arg \min_{x\in \mathbb{R}^N} \left\{ \frac{1}{2} (x-\bar
  x)^{\rm T}
  P^{-1} (x-\bar x) + \frac{1}{2R}(Hx - y_0)^2 \right\} ,
\]
which can be viewed as a regularization  of the ill-posed inverse problem
\[
y_0 = Hx, \quad x\in \mathbb{R}^N, \qquad N>1,
\]
in the sense of Tikhonov. A standard Tikhonov regularization would be based on
$P^{-1} = \delta I$ with the regularization parameter $\delta > 0$
appropriately chosen. In the Bayesian approach to inverse problems the
regularization term is instead determined by the Gaussian prior $\pi_X$.
\end{Example*}

We mention in passing that Bayes' formula has to be replaced by the
Radon-Nikodym derivative in the case where the prior distribution is
not absolutely continuous with respect to the Lebegue measure (or in
case the space ${\cal X}$ does not admit a Lebesgue measure). Consider
as an example the case of an empirical measure $\mu_X$ centered about
the $M$ samples $x_i \in {\cal X}$, $i=1,\ldots,M$, \emph{i.e.}
a weighted sum of point measures given by
\[
\mu_X({\rm d}x) = \frac{1}{M} \sum_{i=1}^M \mu_{x_i}({\rm d}x) .
\]
Then the resulting posterior measure $\mu_X(\cdot |y_{\rm obs})$ is absolutely continuous with respect to
$\mu_X$, \emph{i.e.} there exists a \emph{Radon-Nikodym derivative}
such that
\[
\int_{\mathcal{X}} f(x)\mu_X({\rm d} x|y_0)
= \int_{\mathcal{X}} f(x)\frac{{\rm d}\mu_X(x|y_0)}{{\rm d}\mu_X(x)}
\mu_X({\rm d}x)
\]
and the Radon-Nikodym derivative satisfies
\[
\frac{{\rm d}\mu_X(x|y_0)}{{\rm d}\mu_X(x)} \propto \pi_\Xi (h(x)-y_0) .
\]
Furthermore, the explicit expression for the posterior measure is given by
\[
\mu_X({\rm d}x|y_0) = \sum_{i=1}^M w_i\, \mu_{x_i}({\rm d}x),
\]
with weights $w_i\ge 0$ defined by
\[
w_i \propto  \pi_\Xi (h(x_i)-y_0),
\]
and the constant of proportionality is determined by the condition
$\sum_{i=1}^M w_i = 1$.

%%%%%%%%%%%%%%%%%%%%%%%%%%%%%%%%%%%%%%%%%%%%%%%%%%%%%%%

\subsection{Coupling of random variables} \label{sec:OT}

We have seen that under Bayes' formula a prior
probability measure $\mu_X (\cdot)$ on ${\cal X}$ is tranformed into a posterior
probability measure $\mu_X(\cdot|y_0)$ on ${\cal X}$ conditioned on
the observation $y_0 = Y(\omega)$. With each of the probability measures, we can
associate random variables such that, \emph{e.g.}, $X_1 \sim \mu_X$
and $X_2 \sim \mu_X(\cdot|y_0)$. However, while Bayes'
formula leads to a transformation of measures, it does not imply a
specific transformation on the level of the associated random
variables; many different transformations of random variables lead to
the same probability measure. In this section we will, therefore,
introduce the concept of coupling two probability measures.

\begin{Definition*}[Coupling]
  Let $\mu_{X_1}$ and $\mu_{X_2}$ denote two probability measures on a
  space ${\cal X}$. 
% Coupling $\mu_{X_1}$ and $\mu_{X_2}$ means
%   to construct a joint measure $\mu_Z$ on the product space ${\cal Z}
%   = {\cal X} \times {\cal X}$ such that the marginal measure in ${\cal
%     X}_1$ is $\mu_{X_1}$ and the marginal measure in ${\cal X}$ is
%   $\mu_{X_2}$.  
  A \emph{coupling} of $\mu_{X_1}$ and $\mu_{X_2}$ consists of a pair
  $Z = (X_1,X_2)$ of random variables such that $X_1 \sim
  \mu_{X_1}$, $X_2 \sim \mu_{X_2}$, and $Z \sim
  \mu_Z$. The joint measure $\mu_Z$ on the product space ${\cal Z} =
  {\cal X} \times {\cal X}$, is called the {\it transference plan} for
  this coupling. The set of all transference plans is denoted by
  $\Pi(\mu_{X_1},\mu_{X_2})$.
\end{Definition*}

Here, we will discuss different forms of couplings assuming that both
the source and target distributions are explicitly known, whilst applications
to Bayes formula (\ref{eq:Bayes}) will be discussed in Sections
\ref{sec:MC} and \ref{sec:4}. In practice, the source distribution needs often to be estimated
from available realizations of the underlying random variable $X_1$. This is
the subject of parametric and non-parametric statistics and will not
be discussed in this survey paper. In the context of Bayesian
statistics, knowlege of the source (prior) distribution and the
likelihood implies knowledge of the target (posterior) distribution. 

Since prior distributions in Bayesian inference are generally assumed
to be absolutely continuous, the discussion of couplings will be restricted to the
less abstract case of ${\cal X} = \mathbb{R}^N$ and $\mu_{X_1}({\rm d}
x) = \pi_{X_1}(x){\rm d}x$, $\mu_{X_2} ({\rm d} x) =
\pi_{X_2}(x){\rm d}x$. In other words, we assume that the marginal
measures are absolutely continuous.  We will, in general, not assume
that the coupling is absolutely continuous on ${\cal Z} = {\cal X}
\times {\cal X} = \mathbb{R}^{2N}$. Clearly, couplings always exist
since one can use the trivial product coupling
\[
\pi_Z(x_1,x_2) = \pi_{X_1}(x_1) \pi_{X_2}(x_2),
\]
in which case the associated random variables $X_1$ and $X_2$ are
independent. The more interesting case is that of a deterministic
coupling.

\begin{Definition*}[Deterministic coupling] Assume that we have
  a random variable $X_1$ with law $\mu_{X_1}$ and a second
  probability measure $\mu_{X_2}$.  A diffeomorphism $T:{\cal X} \to
  {\cal X}$ is called a \emph{transport map} if the induced random
  variable $X_2= T(X_1)$ satisfies
\[
\int_{{\cal X}} f(x_2) \mu_{X_2}({\rm d}x_2) = \int_{{\cal X}} f(T(x_1))
\mu_{X_1}({\rm d}x_1) 
\]
for all suitable functions $f:{\cal X} \to \mathbb{R}$. The associated coupling 
\[
\mu_Z({\rm d}x_1,{\rm d}x_2) = \delta (x_2 - T(x_1)) \mu_{X_1}({\rm d} x_1) {\rm d}x_2,
\]
where $\delta(\cdot)$ is the standard Dirac distribution,
is called a \emph{deterministic coupling}. Note that $\mu_Z$ is not absolutely continuous 
even if both $\mu_{X_1}$ and $\mu_{X_2}$ are.
\end{Definition*}

Using
\[
\int_{{\cal X} } f(x_2)  \delta (x_2 - T(x_1)) {\rm d}x_2
= f(T(x_1)) ,
\]
it indeed follows from the above definition of $\mu_Z$ that
\[
\int_{\cal X} f(x_2) \mu_{X_2}({\rm d}x_2) = \int_{\cal Z} f(x_2) \mu_Z({\rm d}x_1,{\rm d}x_2) = 
\int_{\cal X} f(T(x_1)) \mu_{X_1}({\rm d}x_1) .
\]
We discuss a simple example.

\begin{Example*}[One-dimensional transport map] Let $\pi_{X_1}(x)\ge 0$ and
  $\pi_{X_2}(x)>0$ denote two PDFs on ${\cal X} = \mathbb{R}$. We define
  the associated \emph{cumulative distribution functions} by
\[
F_{X_1}(x) = \int_{-\infty}^x \pi_{X_1}(x'){\rm d}x', \qquad
F_{X_2}(x) = \int_{-\infty}^x \pi_{X_2}(x'){\rm d}x'.
\]
Since $F_{X_2}$ is monotonically increasing, it has
a unique inverse $F_{X_2}^{-1}(p)$ for $p \in [0,1]$. The inverse may be used 
to define a transport map that transforms $X_1$ into $X_2$ as follows,
\[
X_2 = T(X_1) = F_{X_2}^{-1} (F_{X_1}(X_1)).
\]
For example, consider the case where $X_1$ is a random
variable with uniform distribution ${\rm U}([0,1])$ and $X_2$ is a
random variable with standard normal distribution ${\rm N}(0,1)$.
Then the transport map between $X_1$ and $X_2$ is simply the inverse of
the cumulative distribution function
\[
F_{X_2}(x) = \frac{1}{\sqrt{2\pi}}\int_{-\infty}^x e^{-(x')^2/2} {\rm d}x',
\]
which provides a standard tool for converting uniformly distributed
random numbers to normally distributed ones.
\end{Example*}

We now extend this transform method to random variables in $\mathbb{R}^N$ with
$N=2$. 

\begin{Example*}[Knothe-Rosenblatt rearrangement]
Let $\pi_{X_1}(x^1,x^2)$ and $\pi_{X_2}(x^1,x^2)$ denote two PDFs on
$x = (x^1,x^2) \in \mathbb{R}^2$. A transport map between $\pi_{X_1}$ and $\pi_{X_2}$ can be
constructed in the following manner. We first find the two
one-dimensional marginals $\pi_{X_1^1}(x^1)$ and $\pi_{X_2^1}(x^1)$ of 
the two PDFs. In the previous example we have seen how to construct a
transport map $X_2^1 = T_1(X_1^1)$ which couples these two one-dimensional
marginal PDFs. Here $X^1_i$ denotes the first component of the random
variables $X_i$, $i=1,2$. Next we write
\[
\pi_{X_1}(x^1,x^2) = \pi_{X_1}(x^2|x^1) \pi_{X_1^1}(x^1), \quad
\pi_{X_2}(x^1,x^2) = \pi_{X_2}(x^2|x^1) \pi_{X_2^1}(x^1)
\]
and find a transport map $X_2^2 = T_2(X_1^1,X^2_1)$ by considering 
one-dimensional couplings between $\pi_{X_1}(x^2|x^1)$ and
$\pi_{X_2}(x^2|T(x^1))$ with $x^1$ fixed. The associated
joint distribution is given by
\[
\pi_Z(x_1^1,x_1^2,x_2^1,x_2^2) = \delta (x_2^1-T_1(x_1^1)) \delta (x_2^2-T_2(x_1^1,x_1^2))
\pi_{X_1}(x_1^1,x_1^2).
\]
 \end{Example*}

 This is called the \emph{Knothe-Rosenblatt rearrangement}, also
 well-known to statisticians under the name of \emph{conditional quantile
 transforms}. It can be extended to $\mathbb{R}^N$, $N\ge 3$ in the
 obvious way by introducing the conditional PDFs
\[
\pi_{X_1}(x^3|x^1,x^2), \qquad \pi_{X_2}(x^3|x^1,x^2),
\]
and by constructing an appropriate map $X_2^3 =
T_3(X_1^1,X_1^2,X_1^3)$ from those conditional PDFs for fixed pairs
$(x_1^1,x_1^2)$ and $(x_2^1,x_2^2) = (T_1(x_1^1),T_2(x_1^1,x_1^2))$
{\it etc.}  While the Knothe-Rosenblatt rearrangement can be used in
quite general situations, it has the undesirable property that the map
depends on the choice of ordering of the variables \emph{i.e.}, in two
dimensions a different map is obtained if one instead first
couples the $x^2$ components.

\begin{Example*}[Affine transport maps for Gaussian distributions]
  Consider two Gaussian distributions ${\rm N}(\bar x_1,\Sigma_1)$ and
  ${\rm N}(\bar x_2,\Sigma_2)$ in $\mathbb{R}^N$ with means $\bar x_1$
  and $\bar x_2$ and covariance matrices $\Sigma_1$ and
  $\Sigma_2$, respectively. We first define the \emph{square root}
  $\Sigma^{1/2}$ of a  symmetric positive definite matrix $\Sigma$ as
  the unique symmetric positiv definite matrix which satisfies $\Sigma^{1/2}
  \Sigma^{1/2} = \Sigma$. Then the affine transformation
\begin{equation} \label{eq:Gcoupling}
x_2 = T(x_1) = \bar x_2 + \Sigma_2^{1/2} \Sigma_1^{-1/2} (x_1-\bar x_1)
\end{equation}
provides a deterministic coupling. Indeed, we find that
\[
(x_2-\bar x_2)^{\rm T}\Sigma_2^{-1} (x_2-\bar x_2) = 
(x_1-\bar x_1)^{\rm T}\Sigma_1^{-1} (x_1-\bar x_1) 
\]
under the suggested coupling. The proposed coupling is, of course, not
unique since 
\[
x_2 = T(x_1) = \bar x_2 + \Sigma_2^{1/2} Q \Sigma_1^{-1/2} (x_1-\bar x_1),
\]
where $Q$ is an orthogonal matrix,
also provides a coupling. We will see in Section \ref{s:enkf} that a
coupling between Gaussian random variables is also at the heart of the
ensemble square root filter formulations of sequential data assimilation.
\end{Example*}

Deterministic couplings can be viewed as a special case of
a \emph{Markov process} $\{X_n\}_{n\in\{1,2\}}$ defined by
\[
\pi_{X_2}(x_2) = \int_{{\cal X}_1} \pi(x_2|x_1) \pi_{X_1}(x_1) {\rm d}x_1,
\]
where $\pi(x_2|x_1)$ denotes an appropriate conditional PDF for the
random variable $X_2$ given $X_1 = x_1$. Indeed, we simply have
\[
\pi(x_2|x_1)  = \delta (x_2-T(x_1))
\]
for deterministic couplings. We will come back to Markov processes in
Section \ref{sec:Markov}.

The trivial coupling $\pi_Z(x_1,x_2) = \pi_{X_1}(x_1) \pi_{X_2}(x_2)$
leads to a zero correlation between the induced random variables $X_1$
and $X_2$ since their covariance is
\[
\mbox{cov} (X_1,X_2) = \mathbb{E}_Z[(x_1-\bar x_1)(x_2-\bar x_2)^{\rm T}]
= \mathbb{E}_Z [x_1x_2^{\rm T}] - \bar x_1 \bar x_2^{\rm T} = 0 ,
\]
where $\bar x_i = \mathbb{E}_{X_i}[x]$. 
A transport map leads instead to the covariance matrix
\[
\mbox{cov} (X_1,X_2) = \mathbb{E}_{Z} [x_1x_2^{\rm T}] - \mathbb{E}_{X_1}[x_1] 
(\mathbb{E}_{X_2}[x_2])^{\rm T}  = \mathbb{E}_{X_1}[x_1 T(x_1)^{\rm T} ] - 
\bar x_1 \bar x_2^{\rm T} ,
\]
which is non-zero in general. If several transport maps exists then
one could choose the one that maximizes the covariance.  Consider,
for example, univariate random variables $X_1$ and $X_2$, then maximising
their covariance for given marginal PDFs has an important
geometric interpretation: it is equivalent to minimizing the mean
square distance between $x_1$ and $T(x_1)=x_2$ given by
\begin{align*}
\mathbb{E}_Z[|x_2-x_1 |^2] &= \mathbb{E}_{X_1}[|x_1|^2] + 
\mathbb{E}_{X_2}[|x_2 |^2] - 2 \mathbb{E}_Z[x_1 x_2] \\
&= \mathbb{E}_{X_1}[|x_1|^2] + 
\mathbb{E}_{X_2}[|x_2|^2] - 2 \mathbb{E}_Z[(x_1-\bar x_1)
(x_2-\bar x_2)] -2 \bar x_1 \bar x_2 \\
&= \mathbb{E}_{X_1}[|x_1|^2] + 
\mathbb{E}_{X_2}[|x_2|^2] -2 \bar x_1 \bar x_2 - 2 \mbox{cov}(X_1,X_2).
\end{align*}
Hence finding a joint measure $\mu_Z$ that minimizes the expectation
of $(x_1-x_2)^2$ simultaneously maximizes the covariance between
$X_1$ and $X_2$. This geometric interpretation leads to the celebrated
Monge-Kantorovitch problem.

\begin{Definition*}[Monge-Kantorovitch problem] A transference plan
  $\mu^\ast_Z \in \Pi(\mu_{X_1},\mu_{X_2})$ is called the solution to
  the \emph{Monge-Kantorovitch problem} with cost function $c(x_1,x_2)
  = \|x_1-x_2\|^2$ if
\begin{equation} \label{eq:MK}
\mu_Z^\ast = \arg \inf_{\mu_Z\in \Pi(\mu_{X_1},\mu_{X_2})} \mathbb{E}_Z [ \|x_1-x_2\|^2].
\end{equation}
The associated function
\[
W(\mu_{X_1},\mu_{X_2}) = \mathbb{E}_Z [ \|x_1-x_2\|^2], \qquad 
\mbox{law}(Z) = \mu^\ast_Z
\]
is called the $L^2$-Wasserstein distance of $\mu_{X_1}$ and $\mu_{X_2}$. 
\end{Definition*}

\begin{Theorem*}[Optimal transference plan] If the measures
  $\mu_{X_i}$, $i=1,2$, are absolutely continuous, then the optimal
  transference plan that solves the Monge-Kantorovitch problem
  corresponds to a deterministic coupling with transfer map
  \[
  X_2 = T(X_1) = \nabla_x\psi(X_1),
  \]
  for some convex potential  $\psi:\mathbb{R}^N \to
  \mathbb{R}$.\end{Theorem*}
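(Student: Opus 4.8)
The plan is to recognize this as a version of Brenier's theorem and to prove it via Kantorovich duality followed by an appeal to cyclical monotonicity. First I would expand the quadratic cost as
\[
\|x_1 - x_2\|^2 = \|x_1\|^2 + \|x_2\|^2 - 2\langle x_1, x_2\rangle,
\]
and observe that the first two terms contribute the fixed quantities $\mathbb{E}_{X_1}[\|x_1\|^2]$ and $\mathbb{E}_{X_2}[\|x_2\|^2]$, which depend only on the prescribed marginals. Hence minimizing $\mathbb{E}_Z[\|x_1-x_2\|^2]$ over $\Pi(\mu_{X_1},\mu_{X_2})$ is equivalent to maximizing the correlation $\mathbb{E}_Z[\langle x_1,x_2\rangle]$, which is exactly the covariance-maximization reformulation anticipated in the discussion preceding the Monge--Kantorovitch problem.

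Next I would invoke the Kantorovich dual problem: the supremum of $\mathbb{E}_Z[\langle x_1,x_2\rangle]$ over transference plans equals the infimum of $\mathbb{E}_{X_1}[\phi] + \mathbb{E}_{X_2}[\psi]$ over pairs of functions obeying the pointwise constraint $\phi(x_1) + \psi(x_2) \ge \langle x_1,x_2\rangle$. The key structural fact to extract is that one may take the optimizing pair to be related by Legendre--Fenchel conjugacy, $\phi = \psi^\ast$, so that both functions are convex and the constraint is saturated, $\phi(x_1) + \psi(x_2) = \langle x_1, x_2\rangle$, precisely on the support of the optimal plan $\mu_Z^\ast$.

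I would then extract the geometric consequence. Saturation of the conjugacy inequality on $\operatorname{supp}\mu_Z^\ast$ forces every point $(x_1,x_2)$ in the support to satisfy $x_2 \in \partial\psi(x_1)$, the subdifferential of the convex potential $\psi$; equivalently, the support is a cyclically monotone set, and Rockafellar's theorem identifies any such set as contained in the subdifferential of a single convex function. This step is the heart of the argument and what ultimately produces the claimed form $X_2 = \nabla_x\psi(X_1)$.

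The final and most delicate step is to upgrade the multivalued relation $x_2\in\partial\psi(x_1)$ to a genuine deterministic map, and here the hypothesis that $\mu_{X_1}$ is absolutely continuous is essential. A convex function on $\mathbb{R}^N$ is differentiable Lebesgue-almost everywhere, so it is differentiable $\mu_{X_1}$-almost everywhere, and at such points $\partial\psi(x_1) = \{\nabla\psi(x_1)\}$ is a singleton. Thus the optimal plan assigns to $\mu_{X_1}$-almost every $x_1$ the unique image $x_2 = \nabla_x\psi(x_1)$, which means $\mu_Z^\ast$ is exactly the deterministic coupling induced by the transport map $T = \nabla\psi$. I expect this measure-theoretic passage---verifying that the non-differentiability set is $\mu_{X_1}$-null and that the optimal plan assigns it no mass---to be the main obstacle, together with the duality attainment needed to guarantee that the optimizing convex potential $\psi$ genuinely exists.
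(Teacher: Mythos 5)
Your proposal is correct in outline but follows a genuinely different route from the paper. The paper's proof is deliberately partial: it restricts the infimum in the Monge--Kantorovitch problem to deterministic couplings, introduces the inverse map $S = T^{-1}$ together with a Lagrange multiplier $\Psi$ enforcing the push-forward constraint, and reads off from the stationarity condition $\delta{\cal L}/\delta S = 0$ that $x_2 = x_1 + \nabla_x\Psi(x_1) = \nabla_x\psi(x_1)$ with $\psi(x) = \tfrac{1}{2}x^{\rm T}x + \Psi(x)$; it explicitly defers the complete statement to the optimal transportation literature. You instead sketch the standard proof of Brenier's theorem: reduce the quadratic cost to correlation maximization, pass to the Kantorovich dual, obtain a Legendre--Fenchel conjugate pair saturating the constraint on the support of the optimal plan, conclude cyclical monotonicity and hence $x_2 \in \partial\psi(x_1)$ via Rockafellar's theorem, and finally use absolute continuity of $\mu_{X_1}$ to replace the subdifferential by the gradient almost everywhere. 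Your route is the stronger one in principle: it addresses optimality over all of $\Pi(\mu_{X_1},\mu_{X_2})$ rather than only deterministic couplings, and it is the only one of the two arguments that actually explains why $\psi$ is convex. The price is that it leans on substantial imported machinery (attainment in the dual problem, Rockafellar's characterization of cyclically monotone sets) that a self-contained write-up would still need to justify. The paper's variational argument is more elementary and fits the survey's Lagrangian theme (the same functional reappears in the derivation of Liouville's equation), but it only identifies critical points among deterministic maps and establishes neither global optimality nor convexity. Both treatments, in the end, point to the same external reference for the full result.
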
 

\begin{proof} We only demonstrate that the solution to the
  Monge-Kantorovitch problem is of the
  desired form when the infimum in (\ref{eq:MK}) is restricted to
  deterministic couplings. See \cite{sr:Villani}
  for a complete proof and also for more general results in terms of
  subgradients and weaker conditions on the two marginal measures.

  We denote the associated PDFs by $\pi_{X_i}$, $i=1,2$. We also
  introduce the inverse transfer map $X_1 = S(X_2) = T^{-1}(X_2)$ and
  consider the functional
\begin{align*}
{\cal L}[S,\Psi] &= \frac{1}{2} \int_{\mathbb{R}^N} \|S(x)-x\|^2
\pi_{X_2}(x){\rm d}x\, + \\
& \qquad \int_{\mathbb{R}^N} \left[ \Psi(S(x)) \pi_{X_2}(x) - \Psi(x)
\pi_{X_1}(x) \right] {\rm d}x 
\end{align*}
in $S$ and a potential $\Psi:\mathbb{R}^N \to \mathbb{R}$. 
We note that
\begin{align*}
& \int_{\mathbb{R}^N} \left[ \Psi(S(x)) \pi_{X_2}(x) - \Psi(x)
\pi_{X_1}(x) \right] {\rm d}x = \\
& \qquad \qquad \int_{\mathbb{R}^N} \Psi(x) \left[
\pi_{X_2}(T(x))|DT(x)| - \pi_{X_1}(x) \right] {\rm d}x
\end{align*}
by a simple change of variables. Here
$|DT(x)|$ denotes the determinant of the Jacobian matrix of $T$ at $x$
and the potential $\Psi$ can be
interpreted as a Lagrange multiplier enforcing the coupling of the two
marginal PDFs under the desired transport map. 

Taking variational derivatives with respect to $S$ and $\Psi$, we obtain two equations
\[
\frac{\delta {\cal L}}{\delta S} = \pi_{X_2}(x)\left[(S(x)-x) + 
\nabla_x \Psi(S(x))\right] = 0
\]
and
\begin{equation} \label{eq:Mass}
\frac{\delta {\cal L}}{\delta \Psi} = -\pi_{X_1}(x) + \pi_{X_2}(T(x))
|DT(x)| = 0
\end{equation}
characterizing critical points of the functional ${\cal L}$. 
The first equality implies
\[
x_2= x_1+ \nabla_x \Psi(x_1) = \nabla_x \left( \frac{1}{2} x_1^{\rm T}x_1 +
  \Psi(x_1) \right) =: \nabla_x \psi(x_1) 
\]
and the second recovers our \emph{Ansatz} that $T$ transforms
$\pi_{X_1}$ into $\pi_{X_2}$ as a result of the Lagrange multiplier $\Psi$.
\end{proof}

\begin{Example*}[Optimal transport maps for Gaussian distributions]
  Consider two Gaussian distributions ${\rm N}(\bar x_1,\Sigma_1)$ and
  ${\rm N}(\bar x_2,\Sigma_2)$ in $\mathbb{R}^N$ with means $\bar x_1$
  and $\bar x_2$ and covariance matrices $\Sigma_1$ and
  $\Sigma_2$, respectively. We had previously discussed the
  deterministic coupling (\ref{eq:Gcoupling}). However, the induced
  affine transformation $x_2 = T(x_1)$ cannot not  be generated from a
  potential $\psi$ since the matrix $\Sigma_2^{1/2} \Sigma_1^{-1/2}$
  is not symmetric. Indeed the optimal coupling in the sense of
  Monge-Kantorovitch with cost function $c(x_1,x_2) = \|x_1-x_2\|^2$ is provided by 
\begin{equation} \label{eq:OGcoupling}
x_2 = T(x_1) := \bar x_2 + \Sigma_2^{1/2} \left[ \Sigma_2^{1/2}
  \Sigma_1 \Sigma_2^{1/2}\right]^{-1/2} \Sigma_2^{1/2} \left(x_1 -
  \bar x_1 \right).
\end{equation}
See \cite{sr:olkin82} for a derivation. The following generalization will
be used in  Section \ref{s:enkf}. Assume that a matrix $A\in
\mathbb{R}^{N\times M}$ is given
such that $\Sigma_2 = A A^{\rm T}$. Clearly we can chose $A =
\Sigma_2^{1/2}$ in which case $M=N$ and $A$ is symmetric. 
However we allow for $A$ to be non-symmetric and $M$ can be different from $N$. 
An important observation is that one can replace $\Sigma_2^{1/2}$ in
(\ref{eq:OGcoupling}) by $A$ and $A^{\rm T}$, respectively, \emph{i.e.}
\begin{equation} 
 \label{eq:OGcoupling2}
T(x_1) = \bar x_2 + A \left[ A^{\rm T}
  \Sigma_1 A \right]^{-1/2} A^{\rm T} \left(x_1 -
  \bar x_1 \right).
\end{equation}
\end{Example*}

While optimal couplings are of broad theoretical and practical
interest, their computational implementation can be very demanding. In
Section \ref{sec:4}, we will discuss an embedding method originally
due to J\"urgen Moser \cite{sr:Moser65}, which leads to a generally
non-optimal but computationally more tractable formulation in the context
of Bayesian statistics and data assimilation.

%%%%%%%%%%%%%%%%%%%%%%%%%%%%%%%%%%%%%%%%%%%%%%%%%%%%%%

\subsection{Monte  Carlo methods} \label{sec:MC}

Monte Carlo methods, also called particle or ensemble methods
depending on the context in which they are being used, can be used to
approximate statistics, namely expectation values $\mathbb{E}_X[f]$,
for a random variable $X$.  We begin by discussing the special case
$f(x) = x$, namely, the mean.

\begin{Definition*}[Empirical mean]
  Given a sequence $X_i$, $i=1,\ldots,M$, of independent random
  variables with identical measure $\mu_X$, the \emph{empirical
  mean} is
\[
\bar x_M = \frac{1}{M} \sum_{i=1}^M X_i(\omega) = 
\frac{1}{M} \sum_{i=1}^M x_i
\]
with samples $x_i = X_i(\omega)$. 
\end{Definition*}

Of course, $\bar x_M$ itself is the realization of a random variable $\bar
X_M$ and we consider the \emph{mean squared error} (MSE)
\begin{align} \nonumber 
{\rm MSE}(\bar x) &= \mathbb{E}_{\bar X_M}[(\bar x_M-\bar x)^2] \\ &= 
(\mathbb{E}_{\bar X_M}[\bar x_M]-\bar x)^2 + \mathbb{E}_{\bar
  X_M}\left[(\bar x_M -
\mathbb{E}_{\bar X_M}[\bar x_M])^2 \right] \label{eq:MSE}
\end{align}
with respect to the exact mean value $\bar x = \mathbb{E}_X[x]$.
We have broken down the MSE into two components: squared bias and
variance. Such a decomposition is possible for any estimator and is known as the
\emph{bias-variance decomposition}. The particular estimator $\bar
X_M$ is called \emph{unbiased} since
$\mathbb{E}_{\bar X_M}[\bar x_M] = \bar x$ for any $M>1$. Furthermore
$\bar X_M$ converges weakly to $\bar x$ under the central limit
theorem provided $\mu_X$ has finite second-order
moments, \emph{i.e.}
\[
\lim_{M\to \infty} \mathbb{E}_{\bar X_M} \left[(\bar x_M -
\mathbb{E}_{\bar X_M}[\bar x_M])^2 \right] = 0.
\]

It remains to generate samples $x_i = X_i(\omega)$ from the
required distribution. Methods to do this include the von Neumann
rejection method and Markov chain Monte Carlo methods, which we will
briefly discuss in Section \ref{sec:Markov}. Often the prior
distribution is assumed to be Gaussian, in which case explicit random
number generators are available. We now turn to the situation where
samples from the prior distribution are available, and are to be used
to approximate the mean of the posterior distribution (or any other
expectation value).

Importance sampling is a classical method to approximate expectation
values of a random variable $X^t \sim \pi_{X^t}$ using samples from a random variable
$X^p \sim \pi_{X^p}$, which requires that the target PDF $\pi_{X^t}$ is absolutely
continuous with respect to proposal PDF $\pi_{X^p}$. This is the case
for the prior and posterior PDFs from Bayes' formula (\ref{eq:Bayes}), \emph{i.e.}~we
set the proposal distribution $\pi_{X^p}(x)$ equal to the prior distribution
$\pi_X(x)$ and the posterior distribution $\pi_X(x|y_0) \propto
\pi_Y(y_0|x)\pi_X(x)$ becomes the target distribution $\pi_{X^t}(x)$.
\begin{Definition*}[Importance sampling for Bayesian estimation]
  Let $x_i^{\rm prior}$, $i=1,\ldots,M$, denote samples from the prior
  PDF $\pi_X(x)$, then the \emph{importance sampler} estimate of the mean
  of the posterior $\pi_X(x|y_0)$ is
  \begin{equation}\label{meanposterior}
    \bar x_M^{\rm post} = \sum_{i=1}^M w_i x_i^{\rm prior}
  \end{equation}
  with \emph{importance weights}
\begin{equation} \label{weightsposterior}
w_i = \frac{\pi_Y (y_0|x_i^{\rm prior})}{
\sum_{i=1}^M \pi_Y(y_0|x_i^{\rm prior})}.
\end{equation}
\end{Definition*}

Importance sampling becomes statistically inefficient when the weights
have largely varying magnitude, which becomes particularly significant
for high-dimensional problems. To demonstrate this effect consider a
uniform prior on the unit hypercube $V = [0,1]^N$. Each of the $M$
samples $x_i$ from this prior formally represent a hypercube with
volume $1/M$. However, the likelihood measures the distance of a
sample $x_i$ to the observation $y_0$ in the Euclidean distance and
the volume of a hypersphere decreases rapidly relative to that of an
associated hypercube as $N$ increases. Within the framework of
the bias-variance decomposition of a mean squared error such as 
(\ref{eq:MSE}), the curse of dimensionality manifests itself in large
variances for finite $M$.

To counteract this curse of dimensionality, one may utilize the
concept of coupling. In other words, assume that we have a transport
map $x^{\rm post} = T(x^{\rm prior})$ which couples the prior and posterior
distributions. Then, with transformed samples
$x_i^{\rm post}  = T(x_i^{\rm prior})$, $i=1,\ldots,M$, we obtain the estimator 
\[
\bar x_M^{\rm post} = \sum_{i=1}^M \hat w_i x_i^{\rm post}
\]
with equal weights $\hat w_i=1/M$. 

Sometimes one cannot couple the prior and posterior distribution
directly, or the coupling is too expensive computationally. Then one
can attempt to find a coupling between the prior PDF $\pi_X(x)$ and an
approximation $\tilde \pi_X(x|y_0)$ to the posterior PDF $\pi_X(x|y_0)
\propto \pi_Y(y_0|x) \pi_X(x)$. Given an associated transport map
$X^{\rm prop} = \tilde T(X^{\rm prior})$, \emph{i.e.}
\[
\tilde \pi_X(\tilde T(x)|y_0) = \pi_X(x) |D\tilde T(x)|^{-1},
\]
one then takes $\tilde \pi_X(x|y_0)$ as the proposal density $\pi_{X^p}(x)$ in an
importance sampler with realizations $x_i^{\rm prop}$, $i=1,\ldots,M$, defined by 
\[
x_i^{\rm prop} = \tilde T(x^{\rm prior}_i).
\]
An asymptotically unbiased estimator for the posterior mean is now provided by
\begin{equation} \label{eq:importantmean}
\bar x_M^{\rm post} = \sum_{i=1}^M \tilde w_i x_i^{\rm prop}
\end{equation}
with  weights
\begin{equation} \label{eq:importantweights}
\tilde w_i \propto \frac{\pi_Y (y_0|x_i^{\rm prop})
  \pi_X(x_i^{\rm prop})}{\tilde \pi_X(x_i^{\rm prop}|y_0)} =
\pi_Y(y_0|x_i^{\rm prop}) |D\tilde T(x_i^{\rm prior})| \frac{\pi_X(x_i^{\rm
    prop})}{\pi_X(x_i^{\rm prior})},
\end{equation}
$i=1,\ldots,M$. The constant of proportionality is chosen such that
$\sum_{i=1}^M \tilde w_i = 1$.  Indeed, if $\pi_{X^p}(x) = \tilde
\pi_X(x|y_0) = \pi_X(x|y_0)$, we recover the case of equal weights
$\tilde w_i = 1/M$, and $\pi_{X^p}(x)=\tilde \pi_X(x|y_0) = \pi_X(x)$
leads to standard importance sampling using prior samples,
\emph{i.e.}~$x_i^{\rm prop} = x_i^{\rm prior}$.

We will return to the subject of sampling from the posterior
distribution in Sections \ref{sec:EPM} and \ref{sec:SMCM}. 

%%%%%%%%%%%%%%%%%%%%%%%%%%%%%%%%%%%%%%%%%%%%%%%%%%%%%%%%%%%%%%%%%%%%%

\subsection*{References} An excellent introduction to many topics
covered in this survey is \cite{sr:jazwinski}. Bayesian inference and a Bayesian
perspective on inverse problems are discussed in \cite{sr:kaipio}, \cite{sr:Neal},
\cite{sr:Lewis}. The monographs  \cite{sr:Villani,sr:Villani2} provide
an in depth introduction to optimal transportation and
coupling of random variables. Monte Carlo methods are covered in
\cite{sr:Liu}. We also point to \cite{sr:hastie} for a discussion of
estimation and regression methods from a bias-variance perspective. 
A discussion of infinite-dimensional Bayesian
inference problems can be found in \cite{sr:stuart10a}.

%%%%%%%%%%%%%%%%%%%%%%%%%%%%%%%%%%%%%%%%%%%%%%%%%%%%%%%%%%%%%%%%%%%%%%
%%%%%%%%%%%%%%%%%%%%%%%%%%%%%%%%%%%%%%%%%%%%%%%%%%%%%%%%%%%%%%%%%%%%%%%

\section{Elementary stochastic processes} \label{sec:Markov}

In this section, we collect basic results concerning stochastic
processes which are of relevance for the data assimilation problem.

\begin{Definition*}[Stochastic process] Let $T$ be a set of indices. A
  \emph{stochastic process} is a family $\{X_t\}_{t\in T}$ of random
  variables on a common space ${\cal X}$, \emph{i.e.}~$X_t(\omega) \in {\cal
    X}$.
\end{Definition*}

In the context of dynamical systems, the variable $t$ corresponds to
time. We distinguish between continuous time $t \in [0,t_{\rm end}]
\subset \mathbb{R}$ or discrete time $t_n = n\Delta t$, $n\in
\{0,1,2,\ldots\} = T$, with $\Delta t>0$ a time-increment. In cases
where subscript indices can be confusing we will also use the
notations $X(t)$ and $X(t_n)$, respectively.

A stochastic process can be seen as a function of two arguments: $t$
and $\omega$. For fixed $\omega$, $X_t(\omega)$ becomes a function of
$t\in T$, which we call a realization or trajectory of the stochastic
process. We will restrict to the case where $X_t(\omega)$ is
continuous in $t$ (with probability 1) in the case of a continuous
time. Alternatively, one can fix the time $t\in T$ and consider the
random variable $X_t(\cdot)$ and its distribution. More generally, one
can consider $l$-tuples $(t_1,t_2,\ldots,t_l)$ and associated
$l$-tuples of random variables
$(X_{t_1}(\cdot),X_{t_2}(\cdot),\ldots,X_{t_l}(\cdot))$ and their
joint distributions. This leads to concepts such as temporal
correlation.

\subsection{Discrete time Markov processes}

First, we develop the concept of Markov processes for discrete time
processes.

\begin{Definition*}[Discrete time Markov processes] The discrete time
  stochastic process $\{X_n\}_{n\in T}$ with ${\cal X} = \mathbb{R}^N$
  and $T = \{0,1,2,\ldots)$ is called a (time-independent) \emph{Markov
  process} if its joint PDFs can be written as
\[
\pi_n(x_0,x_1,\ldots,x_n) = \pi(x_n|x_{n-1}) \pi(x_{n-1}|x_{n-2}) \cdots \pi(x_1|x_0)
\pi_0(x_0)
\]
for all $n\in \{0,1,2,\ldots\} = T$. The associated marginal distributions $\pi_n = \pi_{X_n}$ 
satisfy the \emph{Chapman-Kolmogorov equation}
\begin{equation} \label{eq:CK}
\pi_{n+1}(x') = \int_{\mathbb{R}^N} \pi(x'|x) \pi_n(x) {\rm
  d}x 
\end{equation}
and the process can be recursively repeated to yield a family of
marginal distributions $\{\pi_n\}_{n\in T}$ for given $\pi_0$. This
family can also be characterized by the linear \emph{Frobenius-Perron operator}
\begin{equation} \label{eq:Frobenius}
\pi_{n+1} = {\cal P} \pi_n,
\end{equation}
which is induced by (\ref{eq:CK}).
\end{Definition*}

The above definition is equivalent to the more traditional definition
that a process is Markov if the conditional distributions satisfy
\[
\pi_n(x_n|x_0,x_1,\ldots,x_{n-1}) = \pi (x_n|x_{n-1}).
\]

Note that, contrary to
Bayes' formula (\ref{eq:Bayes}), which directly yields marginal
distributions, the Chapman-Kolmogorov equation (\ref{eq:CK}) starts
from a given coupling 
\[
\pi_{X_{n+1} X_n}(x_{n+1},x_n) = \pi (x_{n+1}|x_n)
  \pi_{X_n}(x_n)
\]
followed by marginalization to derive $\pi_{X_{n+1}}(x_{n+1})$. A
Markov process is called time-dependent if the conditional PDF
$\pi(x'|x)$ depends on $t_n$. While we have considered
time-independent processes in this section, we will see in
Section \ref{sec:4} that the idea of coupling applied to Bayes'
formula leads to time-dependent Markov processes.

%%%%%%%%%%%%%%%%%%%%%%%%%%%%%%%%%%%%%%%%%%%%%%%

\subsection{Stochastic difference and differential equations} \label{sec:SDEs}
\label{s:sdes}
We start from the \emph{stochastic difference equation}
\begin{equation} \label{eq:EMmethod}
X_{n+1} = X_n + \Delta t f(X_n) + \sqrt{2\Delta t} Z_n,
\quad t_{n+1} = t_n + \Delta t,
\end{equation}
where $\Delta t >0$ is a small parameter (the step-size), $f$ is a
given (Lipschitz continuous) function, and $Z_n \sim {\rm N}(0,Q)$ are
independent and identically distributed random variables with correlation matrix $Q$.

The time evolution of the associated marginal densities $\pi_{X_n}$ is
governed by the \emph{Chapman-Kolmogorov equation} with conditional PDF
\begin{align} \nonumber
\pi(x'|x) &= \frac{1}{(4\pi \Delta t)^{N/2}  |Q|^{1/2}} \times \\
& \qquad \quad \exp
\left( -\frac{1}{4\Delta t}
(x' - x - \Delta t f(x))^{\rm T} Q^{-1} (x'-x-\Delta t  f(x)) \right). \label{eq:CKFP}
\end{align}
 
\begin{Proposition*}[Stochastic differential and Fokker-Planck equation]
Taking the limit $\Delta t \to 0$, one obtains the stochastic
differential equation (SDE)
\begin{equation} \label{eq:SDE}
{\rm d}X_t = f(X_t){\rm d}t + \sqrt{2} Q^{1/2} {\rm d}W_t
\end{equation}
for $X_t$, where $\{W_t\}_{t\ge 0}$ denotes standard $N$-dimensional Brownian
motion, and the Fokker-Planck equation
\begin{equation} \label{eq:FokkerPlanck}
\frac{\partial \pi_{X}}{\partial t} = -\nabla_x \cdot (\pi_{X} f)
+ \nabla_x \cdot (Q \nabla_x \pi_{X} )
\end{equation}
for the marginal density $\pi_X(x,t)$.  Note that $Q=0$ (no noise) leads to
the Liouville, transport or continuity equation
\begin{equation} \label{eq:Liouville}
\frac{\partial \pi_{X}}{\partial t} = -\nabla_x \cdot (\pi_{X} f),
\end{equation}
which implies that we may interpret $f$ as a given velocity field in
the sense of fluid mechanics.
\end{Proposition*}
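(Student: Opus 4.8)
The plan is to treat the two assertions separately. The stochastic differential equation (\ref{eq:SDE}) is, in essence, a restatement that (\ref{eq:EMmethod}) is the Euler--Maruyama discretisation of (\ref{eq:SDE}). I would first observe that, for a standard $N$-dimensional Brownian motion $\{W_t\}$, the increment $W_{t_{n+1}}-W_{t_n}$ is distributed as ${\rm N}(0,\Delta t\, I)$, so that $\sqrt{2}\,Q^{1/2}(W_{t_{n+1}}-W_{t_n}) \sim {\rm N}(0,2\Delta t\, Q)$ has exactly the law of the noise increment $\sqrt{2\Delta t}\,Z_n$ appearing in (\ref{eq:EMmethod}). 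Hence (\ref{eq:EMmethod}) is precisely the Euler--Maruyama scheme for (\ref{eq:SDE}) on the grid $t_n = n\Delta t$, and its convergence to the It\^o diffusion as $\Delta t\to 0$ is the standard convergence statement for that scheme.

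For the Fokker--Planck equation (\ref{eq:FokkerPlanck}) I would argue in weak form, through the transition moments of the kernel (\ref{eq:CKFP}) (a Kramers--Moyal expansion). Fix a smooth, rapidly decaying test function $\phi:\mathbb{R}^N\to\mathbb{R}$. Since the conditional law $\pi(\cdot|x)$ is Gaussian with mean $x+\Delta t\, f(x)$ and covariance $2\Delta t\, Q$, I would compute
\[
\int_{\mathbb{R}^N}(x'-x)\,\pi(x'|x)\,{\rm d}x' = \Delta t\, f(x), \qquad
\int_{\mathbb{R}^N}(x'-x)(x'-x)^{\rm T}\pi(x'|x)\,{\rm d}x' = 2\Delta t\, Q + O(\Delta t^2),
\]
with all higher moments of the increment $x'-x$ of order $O(\Delta t^2)$. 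Taylor expanding $\phi(x')$ about $x$ and integrating against $\pi(\cdot|x)$, the linear term in the noise vanishes in expectation and the quadratic term contributes $\Delta t\,\mbox{tr}(Q\,\nabla_x^2\phi(x))$, so that
\[
\int_{\mathbb{R}^N}\phi(x')\,\pi(x'|x)\,{\rm d}x' = \phi(x) + \Delta t\left[\, f(x)\cdot\nabla_x\phi(x) + \mbox{tr}\!\left(Q\,\nabla_x^2\phi(x)\right)\right] + O(\Delta t^{3/2}).
\]
This identifies the backward generator $({\cal A}\phi)(x) = f(x)\cdot\nabla_x\phi(x) + \mbox{tr}(Q\,\nabla_x^2\phi(x))$.

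I would then insert this expansion into the Chapman--Kolmogorov equation (\ref{eq:CK}) tested against $\phi$, obtaining
\[
\frac{1}{\Delta t}\left(\mathbb{E}[\phi(X_{n+1})] - \mathbb{E}[\phi(X_n)]\right) = \int_{\mathbb{R}^N}\pi_n(x)\,({\cal A}\phi)(x)\,{\rm d}x + O(\Delta t^{1/2}).
\]
Letting $\Delta t\to 0$ with $t_n\to t$ fixed, the left-hand side tends to $\frac{{\rm d}}{{\rm d}t}\int\phi\,\pi_X\,{\rm d}x = \int\phi\,\partial_t\pi_X\,{\rm d}x$; on the right-hand side I would move ${\cal A}$ onto $\pi_X$ by integration by parts, producing its formal adjoint ${\cal A}^\ast\pi_X = -\nabla_x\cdot(\pi_X f) + \nabla_x\cdot(Q\nabla_x\pi_X)$, where I use that $Q$ is constant and symmetric so that $\mbox{tr}(Q\nabla_x^2\phi)$ is adjoint to $\nabla_x\cdot(Q\nabla_x\,\cdot)$. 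Since $\phi$ is arbitrary, equating integrands yields (\ref{eq:FokkerPlanck}); setting $Q=0$ removes the second-order term and leaves the Liouville equation (\ref{eq:Liouville}).

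The main obstacle lies entirely in the rigour of the limit rather than the algebra: one must show that the Taylor remainder and the neglected higher Kramers--Moyal coefficients are genuinely $o(\Delta t)$, uniformly enough to vanish after division by $\Delta t$ and integration against $\pi_n$, and one must justify that the discrete-time difference quotient passes to the continuous-time derivative $\partial_t\pi_X$ together with the integration by parts. Both steps require a priori regularity and decay of $\pi_X(\cdot,t)$ and of $f$. For the purposes of this survey I would present the computation as a formal derivation and point to the standard diffusion-approximation literature for the precise conditions under which it is rigorous.
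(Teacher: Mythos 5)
Your derivation is correct at the same formal level as the paper's, but it takes a genuinely different route for the Fokker--Planck part. The paper works directly on the density: it specialises to the scalar pure-diffusion case ($f=0$, $Q=1$, $N=1$), substitutes $y=x-x'$ in the Chapman--Kolmogorov integral, Taylor expands $\pi_n(x'+y)$ in $y$, and reads off the heat equation from the Gaussian moments; the drift term is then obtained by a separate argument, applying the change-of-variables identity (\ref{eq:Mass}) to the map $T(x)=x+\Delta t\,f(x)$ and expanding the Jacobian determinant $|DT|$. You instead work in weak form: you compute the first two transition moments of the kernel (\ref{eq:CKFP}), identify the backward generator ${\cal A}\phi = f\cdot\nabla_x\phi + \mbox{tr}(Q\,\nabla_x^2\phi)$ by Taylor expanding a test function, and pass to the adjoint by integration by parts. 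Your approach buys generality and uniformity --- it treats the full multivariate case with drift and diffusion simultaneously, and makes the generator/adjoint structure explicit, which is the standard probabilistic viewpoint. The paper's approach buys two things in return: it avoids test functions and duality entirely, staying at the level of explicit Gaussian integrals against the density, and its Jacobian-based derivation of the Liouville equation deliberately reuses the mass-conservation equation (\ref{eq:Mass}) from the optimal-transport section, reinforcing the transport-map theme of the survey. Both treatments are formal, with the rigorous convergence statements (for Euler--Maruyama and for the diffusion limit) deferred to the literature, and your closing paragraph correctly identifies where the real analytical work would lie.
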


\begin{proof}  The difference equation (\ref{eq:EMmethod}) is called
the Euler-Maruyama method for approximating the SDE (\ref{eq:SDE}). 
See \cite{sr:Higham,sr:Kloeden} for a discussion on the convergence
of (\ref{eq:EMmethod}) to (\ref{eq:SDE}) at $\Delta t \to 0$.

The Fokker-Planck equation (\ref{eq:FokkerPlanck}) is the linear
combination of a drift and a diffusion term. To simplify the
discussion we derive both terms separately from (\ref{eq:EMmethod}) by
first considering $f=0$, $Q\not= 0$ and then $Q=0$, $f\not= 0$. 
To simplify the derivation of the diffusion term even further we also assume
$x\in\mathbb{R}$ and $Q=1$. In other words, we show that scalar Brownian motion
\[
{\rm d}X_t = \sqrt{2} {\rm d}W_t
\]
leads to the heat equation
\[
\frac{\partial \pi_X}{\partial t} = \frac{\partial^2
  \pi_X}{\partial x^2}.
\]
We first note that the conditional PDF (\ref{eq:CKFP}) reduces to
\begin{align*} 
\pi(x'|x) &= (4\pi\Delta t)^{-1/2} \exp
\left( -\frac{(x'-x)^2}{4\Delta t} \right)
\end{align*}
under $f(x) = 0$, $Q=1$, $N=1$, and the
Chapman-Kolmogorov equation (\ref{eq:CK}) becomes 
\begin{equation} \label{eq:h1}
\pi_{n+1}(x') = \int_{\mathbb{R}} \frac{1}{\sqrt{4\pi \Delta t}}
e^{-y^2/(4\Delta t)}
\pi_n(x'+y){\rm d}y 
\end{equation}
under the variable substitution $y = x-x'$.
We now expand $\pi_n(x'+y)$ in $y$ about $y=0$, \emph{i.e.}~
\[
\pi_n(x'+y) = \pi_n(x') + y\frac{\partial \pi_n}{\partial x}(x') +
\frac{y^2}{2} \frac{\partial^2 \pi_n}{\partial x^2}(x') + \cdots ,
\]
and substitute the expansion into (\ref{eq:h1}):
\begin{align*}
\pi_{n+1}(x') &=  \int_{\mathbb{R}} \frac{1}{\sqrt{4\pi \Delta t}}
e^{-y^2/(4\Delta t}
\pi_n(x') {\rm d}y \\
& \qquad +  \int_{\mathbb{R}} \frac{1}{\sqrt{4\pi \Delta t}}
e^{-y^2/(4\Delta t)}
y \frac{\partial \pi_n}{\partial x} (x') {\rm d}y \\
& \qquad  +  \int_{\mathbb{R}} \frac{1}{\sqrt{4\pi \Delta t}} e^{-y^2/(4\Delta t)}
\frac{y^2}{2} \frac{\partial^2 \pi_n}{\partial x^2} (x') {\rm d}y +
\cdots .
\end{align*}
The integrals correspond to the zeroth, first and second-order moments of the
Gaussian distribution with mean zero and variance $2 \Delta t$. Hence
\[
\pi_{n+1}(x') = \pi_n(x') + 
\Delta t \frac{\partial^2 \pi_n}{\partial x^2} (x') + \cdots 
\]
and it can also easily be shown that the neglected higher-order terms
contribute with ${\cal O}(\Delta t^2)$ terms. Therefore
\[
\frac{\pi_{n+1}(x')-\pi_n(x_n)}{\Delta t} = 
\frac{\partial^2 \pi_n}{\partial x^2} (x') + {\cal O}(\Delta t),
\]
and the heat equation is obtained upon taking the limit $\Delta t\to
0$. The non-vanishing drift case, \emph{i.e.}~$f(x)\not=0$, while being more
technical, can be treated in the same manner.

One can also use (\ref{eq:Mass}) to derive Liouville's equation
(\ref{eq:Liouville}) directly. We set
\[
T(x) = x + \Delta t f(x)
\]
and note that
\[
|DT(x)| = 1 + \Delta t \nabla_x \cdot f + {\cal O}(\Delta t^2).
\]
Hence (\ref{eq:Mass}) implies
\[
\pi_{X_1} = \pi_{X_2} + \Delta t \pi_{X_2} \nabla_x \cdot f + \Delta t (\nabla_x
\pi_{x_2} ) \cdot f + {\cal O}(\Delta t^2)
\]
and
\[
\frac{\pi_{X_2}-\pi_{X_1}}{\Delta t} = -\nabla_x \cdot (\pi_{X_2}
f)  + {\cal O}(\Delta t).
\]
Taking the limit $\Delta t \to 0$, we obtain (\ref{eq:Liouville}). 
\end{proof}

Following the work of Felix Otto (see, \emph{e.g.},
\cite{sr:Otto01,sr:Villani}), we note that in the case of pure
diffusion, \emph{i.e.} $f=0$, the Fokker-Planck equation can be
rewritten as a gradient flow system.  We first introduce some
notation.

\begin{Definition*}[differential geometric structure on manifold of
  probability densities]
We formally introduce the \emph{manifold
of all PDFs on} ${\cal X} = \mathbb{R}^N$ 
\[
{\cal M} = \{\pi:\mathbb{R}^N\to \mathbb{R}: \pi(x)\ge 0,
\,\int_{\mathbb{R}^N} \pi(x){\rm d}x = 1\}
\]
with \emph{tangent space}
\[
T_\pi{\cal M} = \{ \phi:\mathbb{R}^N\to \mathbb{R}:
\int_{\mathbb{R}^N} \phi(x){\rm d}x = 0\} .
\]
The \emph{variational derivative} of a functional $F:{\cal M}\to
\mathbb{R}$ is defined as
\[
\int_{\mathbb{R}^N} \frac{\delta F}{\delta \pi} \phi \, {\rm d}x = 
\lim_{\epsilon \to 0} \frac{F(\pi + \epsilon \phi)-F(\pi)}{\epsilon}.
\]
where $\phi$ is a function such that $\int_{\mathbb{R}^N} \phi {\rm d}x = 0$, \emph{i.e.}~$\phi \in
T_\pi{\cal M}$.
\end{Definition*}

Consider the potential
\begin{equation} \label{eq:potD}
V(\pi_X) = \int_{\mathbb{R}^n} \pi_X \ln \pi_X  {\rm d}x,
\end{equation}
which has functional derivative 
\[
\frac{\delta V}{\delta \pi_X} = \ln \pi_X,
\]
since 
\begin{align*}
V(\pi_X + \epsilon \phi)  &= V(\pi_X) + \epsilon \int_{\mathbb{R}^N}
( \phi \ln \pi_X + \phi) {\rm d}x + {\cal O}(\epsilon^2)\\
&= V(\pi_X) + \epsilon \int_{\mathbb{R}^N}
\phi \ln \pi_X \,{\rm d}x + {\cal O}(\epsilon^2),
\end{align*}
Hence, we find that the diffusion part of the Fokker-Planck equation
is equivalent to
\begin{equation} \label{eq:Otto}
\frac{\partial \pi_X}{\partial t} = \nabla_x \cdot (Q \nabla_x \pi_X) =
\nabla_x \cdot \left\{ \pi_X Q \nabla_x \frac{\delta V}{\delta \pi_X} \right\}.
\end{equation}
This formulation allows us to treat diffusion in form of a vector field
\[
v(x,t) = -Q \nabla_x \frac{\delta V}{\delta \pi_X} 
\]
which, contrary to vector fields arising from the theory of ordinary
differential equations, depends on the PDF $\pi_X$. See the following
Section \ref{sec:EPM} for an application.

\begin{Proposition*}[Gradient on the manifold of probability densities]
Let $g_{\pi}$ be a metric tensor defined on $T_\pi{\cal M}$ as
\[
g_\pi(\phi_1,\phi_2) = \int_{\mathbb{R}^N}  (\nabla_x \psi_1) \cdot
({\rm M} \nabla_x \psi_2)\,\pi  {\rm d}x
\]
with potentials $\psi_i$, $i=1,2$, determined by the elliptic
partial differential equation (PDE)
\[
-\nabla_x \cdot( \pi {\rm M}\nabla_x \psi_i) = \phi_i,
\]
where ${\rm M} \in \mathbb{R}^{N\times N}$ is a symmetric, positive-definite matrix.

Then the gradient of a potential $F(\pi)$ under $g_\pi$ satisfies
\begin{equation} \label{gradient}
\mbox{\rm grad}_\pi F(\pi) = -\nabla_x \cdot \left( \pi {\rm M}
  \nabla_x \frac{\delta F}{\delta \pi}  \right).
\end{equation}
\end{Proposition*}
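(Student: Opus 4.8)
The plan is to verify the claimed formula directly against the defining property of the Riemannian gradient. On the manifold $\mathcal{M}$ equipped with the metric $g_\pi$, the gradient $\mathrm{grad}_\pi F$ is the unique element of $T_\pi\mathcal{M}$ representing the differential of $F$, \emph{i.e.} the tangent vector satisfying
\[
g_\pi(\mathrm{grad}_\pi F,\phi) = \int_{\mathbb{R}^N} \frac{\delta F}{\delta \pi}\,\phi\,{\rm d}x
\]
for every $\phi \in T_\pi\mathcal{M}$, where the right-hand side is precisely $dF$ applied to $\phi$ in the sense of the variational-derivative definition introduced above. So it suffices to insert the candidate $\phi_1 := -\nabla_x\cdot(\pi {\rm M}\nabla_x \frac{\delta F}{\delta \pi})$ into the left-hand side and recover the right-hand side for arbitrary $\phi_2 \in T_\pi\mathcal{M}$.

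First I would identify the potential $\psi_1$ that the metric associates with this candidate. By construction $\psi_1$ solves $-\nabla_x\cdot(\pi {\rm M}\nabla_x\psi_1) = \phi_1$, and comparing with the definition of $\phi_1$ one may simply take $\psi_1 = \delta F/\delta\pi$. Writing out the metric against an arbitrary $\phi_2$ with associated potential $\psi_2$ then gives
\[
g_\pi(\phi_1,\phi_2) = \int_{\mathbb{R}^N}(\nabla_x\psi_1)\cdot({\rm M}\nabla_x\psi_2)\,\pi\,{\rm d}x = \int_{\mathbb{R}^N}\left(\nabla_x\frac{\delta F}{\delta\pi}\right)\cdot(\pi {\rm M}\nabla_x\psi_2)\,{\rm d}x.
\]
Next I would integrate by parts, moving the divergence onto the second factor and using the symmetry of ${\rm M}$, to obtain
\[
g_\pi(\phi_1,\phi_2) = -\int_{\mathbb{R}^N}\frac{\delta F}{\delta\pi}\,\nabla_x\cdot(\pi {\rm M}\nabla_x\psi_2)\,{\rm d}x = \int_{\mathbb{R}^N}\frac{\delta F}{\delta\pi}\,\phi_2\,{\rm d}x,
\]
where the final equality invokes the elliptic PDE $-\nabla_x\cdot(\pi {\rm M}\nabla_x\psi_2) = \phi_2$ defining $\psi_2$. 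This is exactly the defining property of the gradient, so $\phi_1 = \mathrm{grad}_\pi F$ and (\ref{gradient}) follows.

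Two points deserve care. The elliptic problem fixes each potential $\psi_i$ only up to an additive constant, but this is harmless since only $\nabla_x\psi_i$ enters both the metric and the candidate $\phi_1$, so the ambiguity drops out; equivalently, the constants are exactly the $L^2$-orthogonal complement of $T_\pi\mathcal{M}$, which pairs trivially with the mean-zero test functions $\phi_2$. The genuine obstacle is the integration-by-parts step, which silently discards a surface term $\int (\delta F/\delta\pi)\,\pi {\rm M}\nabla_x\psi_2\cdot n$ over large spheres; this vanishes only under suitable decay of $\pi$ and of the potentials at infinity. I would record that decay as a standing hypothesis rather than establish it, consistent with the formal, differential-geometric level at which the manifold $\mathcal{M}$ and its tangent spaces were introduced.
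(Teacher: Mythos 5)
Your proof is correct and follows essentially the same route as the paper's: both identify the potential associated with the candidate gradient as $\delta F/\delta\pi$ and verify the defining identity $g_\pi(\mathrm{grad}_\pi F,\phi)=\int (\delta F/\delta\pi)\,\phi\,{\rm d}x$ by a single integration by parts against the elliptic equation for $\psi$. Your added remarks on the additive-constant ambiguity and the discarded boundary term are sensible refinements of points the paper leaves implicit, but they do not change the argument.
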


\begin{proof} Given the metric tensor $g_\pi$, the gradient is defined 
by
\begin{equation} \label{eq:gradotto}
g_\pi(\mbox{grad}_\pi F(\pi),\phi) = \int_{\mathbb{R}^N} 
 \frac{\delta F}{\delta \pi} \phi {\rm d}x
\end{equation}
for all $\phi \in T_\pi {\cal M}$. Since any element $\phi \in
T_\pi{\cal M}$ can be written in the form 
\[
\phi = -\nabla_x \cdot (\pi {\rm M}\nabla_x \psi)
\]
with suitable potential $\psi$, a potential $\widehat{\psi}$
exists such that 
\[
\mbox{grad}_\pi F(\pi) = -\nabla_x \cdot(\pi {\rm M} \nabla_x \widehat
\psi) \in T_\pi{\cal M}
\]
and we need to demonstrate that
\[
\widehat \psi = \frac{\delta F}{\delta \pi}
\]
is consistent with (\ref{eq:gradotto}). Indeed, we find that
\begin{align*}
\int_{\mathbb{R}^N} \frac{\delta F}{\delta \pi} \phi {\rm d}x &=
-\int_{\mathbb{R}^N} \frac{\delta F}{\delta \pi} \nabla_x \cdot (\pi
{\rm M} \nabla_x \psi ) {\rm d}x \\
&= \int_{\mathbb{R}^N} \pi \nabla_x \frac{\delta F}{\delta \pi} \cdot
({\rm M} \nabla_x \psi) {\rm d} x \\
&= \int_{\mathbb{R}^N} ( \nabla_x \widehat \psi) \cdot ({\rm M}
\nabla_x \psi) \pi {\rm d} x \\ 
&= g_\pi(\mbox{grad}F(\pi), \phi).
\end{align*}
\end{proof}

It follows that the diffusion part of the Fokker-Planck equation can be
viewed as a gradient flow on the manifold ${\cal M}$. More precisely,  
set $F(\pi)  = V(\pi_X)$ and ${\rm M} = Q$ to reformulate (\ref{eq:Otto})
as a gradient flow
\[
\frac{\partial \pi_{X}}{\partial t} = - \mbox{grad}_{\pi_X} V(\pi_X) 
\]
with potential (\ref{eq:potD}).
We will find in Section \ref{sec:4} that related geometric structures
arise from Bayes' formula in the context of filtering. We finally note that
\begin{align*}
\frac{{\rm d}V}{{\rm d}t} &= \int_{\mathbb{R}^N} \frac{\delta V}{\delta \pi_X}  
\frac{\partial \pi_X}{\partial t} {\rm d}x \\
&= -\int_{\mathbb{R}^N}  \left(\nabla_x \frac{\delta V}{\delta
      \pi_X} \right)  
 \cdot  \left({\rm M} \nabla_x \frac{\delta V}{\delta \pi_X}  \right) \pi_X {\rm d}x 
\le 0.
\end{align*}

%%%%%%%%%%%%%%%%%%%%%%%%%%%%%%%%%%%%%%%%%%%

\subsection{Ensemble prediction and sampling methods} \label{sec:EPM}

In this section, we extend the Monte Carlo method from Section
\ref{sec:MC} to the approximation of the marginal PDFs $\pi_X(x,t)$,
$t\ge 0$, evolving under the SDE model (\ref{eq:SDE}). Assume that we have a set of
independent samples $x_i(0)$, $i=1,\ldots,M$, from the initial PDF
$\pi_X(x,0)$.

\begin{Definition*}[ensemble prediction]
A Monte Carlo approximation to
the time-evolved marginal PDFs $\pi_X(x,t)$ can be obtained 
from solving the SDEs
\begin{equation} \label{eq:MCSDE}
{\rm d} x_i = f(x_i){\rm d}t + \sqrt{2}Q^{1/2} {\rm d}W_i(t)
\end{equation}
for $i=1,\ldots,M$, where $\{W_i(t)\}_{i=1}^M$ denote realizations of
independent standard $N$-dimensional Brownian motion and the initial
conditions $\{x_i(0)\}_{i=1}^M$ are realizations of the initial PDF
$\pi_X(x,0)$. This approximation provides an example for a
\emph{particle} or \emph{ensemble prediction} method and it can be
shown that the estimator
\begin{equation}  \label{eq:EME}
\bar x_M(t) = \frac{1}{M} \sum_{i=1}^M x_i(t)
\end{equation}
provides a consistent and unbiased approximation to
the mean $\mathbb{E}_{X_t}[x]$.
\end{Definition*}

Alternatively, using formulation (\ref{eq:Otto}) of the Fokker-Planck
equation (\ref{eq:FokkerPlanck}) in the pure diffusion case, we may
reformulate the random part in (\ref{eq:MCSDE}) and introduce particle
equations
\begin{align} \nonumber
\frac{{\rm d}x_i}{{\rm d}t} &=
f(x_i) - Q \nabla_x \frac{\delta V}{\delta \pi_X}(x_i) \\
&= f(x_i) - \frac{1}{\pi_X(x_i,t)} Q \nabla_x \pi_X(x_i,t) , \label{eq:ensembleFP}
\end{align}
$i=1,\ldots,M$. Contrary to the SDE (\ref{eq:MCSDE}), this formulation requires the PDF
$\pi_X(x,t)$, which is not explicitly available in general.  However, a
Gaussian approximation can be obtained from the available
ensemble $x_i(t)$, $i=1,\ldots,M$, using
\[
\pi_X (x,t) \approx \frac{1}{(2\pi)^{N/2}
  |P|^{1/2}}\exp\left(-\frac{1}{2} (x-\bar x_M(t))^{\rm T} P(t)^{-1} (x-\bar x_M(t))
\right)
\]
with empirical mean (\ref{eq:EME}) and empirical covariance matrix
\begin{equation} \label{eq:ECM}
P = \frac{1}{M-1} \sum_{i=1}^M (x_i-\bar x_M)(x_i-\bar x_M)^{\rm T} .
\end{equation}
Substituting this Gaussian approximation into (\ref{eq:ensembleFP})
yields the ensemble evolution equations 
\begin{equation} \label{eq:inflation}
\frac{{\rm d}x_i}{{\rm d}t} = f(x_i) + QP^{-1} (x_i-\bar x_M) ,
\end{equation}
which becomes exact in case the vector field $f$ is linear, \emph{i.e.}~$f(x) =
Ax + u$, the initial PDF $\pi_X(x,0)$ is Gaussian and for ensemble sizes $M\to \infty$.

We finally discuss the application of a particular type of SDEs
(\ref{eq:SDE}) as a way of generating samples $x_i$ from a given PDF
such as the posterior $\pi_X(x|y_0)$ of Bayesian inference. To do
this, consider the SDE (\ref{eq:SDE}) with the vector field $f$ being
generated by a potential $U(x)$, \emph{i.e.}~$f(x) = -\nabla_x U(x)$,
and $Q = I$. Then it can easily be verified that the PDF
\[
\pi^\ast_X(x) = Z^{-1} \exp (-U(x)), \qquad Z = \int_{\mathbb{R}^N}
\exp(-U(x)){\rm d}x,
\]
is stationary under the associated Fokker-Planck equation 
(\ref{eq:FokkerPlanck}). Indeed
\[
\nabla_x\cdot (\pi_X^\ast \nabla_X U) + \nabla_x \cdot
\nabla_x \pi_X^\ast = \nabla_x \cdot (\pi_X^\ast \nabla_x U + \nabla_x
\pi_X^\ast ) = 0 .
\]
Furthermore, it can be shown that $\pi_X^\ast$ is the unique
stationary PDF and that any initial PDF $\pi_X(t=0)$ approaches
$\pi_X^\ast$ at exponential rate under appropriate assumption on the 
potential $V$. Hence $X_t \sim \pi_X^\ast$ for $t \to
\infty$. This allows us to use an ensemble of solutions $x_i(t)$ of
(\ref{eq:MCSDE}) with an arbitrary initial PDF $\pi_X(x,0)$ 
as a method for generating ensembles from the prior 
or posterior Bayesian PDFs provided $U(x) = -\ln \pi_X(x)$ or $U(x) = -\ln \pi_X(x|y_0)$,
respectively. Note that the temporal dynamics of the associated SDE
(\ref{eq:SDE}) is not of any physical significance in this context
instead the SDE formulation is only taken as a device for generating
the desired samples. If the SDE formulation is replaced by the Euler-Maruyama method 
(\ref{eq:EMmethod}), time-stepping errors lead to sampling errors
which can be corrected for by combining (\ref{eq:EMmethod}) with a
Metropolis accept-reject criterion. The Metropolis adjusted method
gives rise to particular instances of \emph{Markov chain Monte Carlo
(MCMC) methods} such as the \emph{Metropolis adjusted Langevin algorithm (MALA)} or
the \emph{hybrid Monte Carlo (HMC) method}. The basic idea of MALA (as
well as HMC) is to rewrite (\ref{eq:EMmethod}) with $f(x) = -\nabla_x U(x)$, $Q=I$ as
\begin{align}
\label{eq:p step 1}
p_{n+1/2} &= p_n - \frac{1}{2} \sqrt{2\Delta t}\nabla_x U(x_n),\\
x_{n+1} &= x_n + \sqrt{2\Delta t} p_{n+1/2},\\
p_{n+1} & = p_{n+1/2} - \frac{1}{2} \sqrt{2\Delta t}\nabla_x U(p_n)
\label{eq:p step 2}
\end{align}
having introduced a dummy momentum variable $p$ with $p_n$ being a
realization of the random variable $Z_n \sim {\rm N}(0,I)$. Under the
Metropolis accept-reject criterion $x_{n+1}$ is accepted with
probability
\[
\min \{1,\exp (-(E_{n+1}-E_n))\},
\]
where
\[
E_n = \frac{1}{2} p_n^{\rm T} p_n + U(x_n), \qquad 
E_{n+1} = \frac{1}{2} p_{n+1}^{\rm T} p_{n+1} + U(x_{n+1})
\]
are the initial and final energies. Upon rejection one continues with
$x_n$.  The momentum value $p_{n+1}$ is discarded after a completed
time-step (regardless of its acceptance or rejection) and a new
momentum value is drawn from ${\rm N}(0,I)$. It should however be
noted that $|E_{n+1}-E_n| \to 0$ as the step-size $\Delta t$ goes to
zero and in practice the application of the Metropolis
accept-rejection step is often not necessary unless $\Delta t$ is
chosen too large. The HMC method differs from MALA in that several
iterations of (\ref{eq:p step 1}-\ref{eq:p step 2}) are applied before the
Metropolis accept-reject criterion is being applied.

%%%%%%%%%%%%%%%%%%%%%%%%%%%%%%%%%%%%%%%%%%%%%%%%%%%

\subsection*{References} 

A gentle introduction to stochastic processes can be found in \cite{sr:gardiner} and \cite{sr:Chorin}. 
A more mathematical treatment can be found in \cite{sr:StochProc,sr:SDEbook} and
numerical issues are discussed in \cite{sr:Higham,sr:Kloeden}. See \cite{sr:Otto01,sr:Villani}
for a discussion of the gradient flow structure of the Fokker-Planck equation. 
The ergodic behavior of Markov chains is covered in
\cite{sr:Meyn}. Markov chain Monte Carlo methods
and the hybrid Monte Carlo method in particular are treated in \cite{sr:Liu}. See also
\cite{sr:Roberts96} for the Metropolis adjusted Langevin algorithm (MALA).

%%%%%%%%%%%%%%%%%%%%%%%%%%%%%%%%%%%%%%%%%%%%%%%%%%%%%%%%%%%%%%%%%%
%%%%%%%%%%%%%%%%%%%%%%%%%%%%%%%%%%%%%%%%%%%%%%%%%%%%%%%%%%%%%%%%%

\section{Recent advances in data assimilation and filtering} \label{sec:4}

In this section, we combine Bayesian inference and stochastic
processes to tackle the problem of assimilating observational data
into scientific models.

\subsection{Preliminaries}

We select a model written as a time-discretized SDE, such as
(\ref{eq:EMmethod}), with the initial random variable satisfying $X_0
\sim \pi_0$. In addition to the pure prediction problem of computing
$\pi_n$, $n\ge 1$, for given $\pi_0$, we assume that model states $x
\in {\cal X} = \mathbb{R}^N$ are partially observed at equally spaced
instances in time.  These observations are to be assimilated into the
model.  More generally, \emph{intermittent data assimilation} is
concerned with fixed observation intervals $\Delta t_{\rm obs}>0$ and
model time-steps $\Delta t$ such that $\Delta t_{\rm obs} = L \Delta
t$, $L\ge 1$, which allows one to take the limit $L\to \infty$,
$\Delta t=\Delta t_{\rm obs}/L$. For simplicity, we will restrict the
discussion to the case where observations $y_0(t_n) = Y_n(\omega) \in
\mathbb{R}^K$ are made at every time step $t_n = n\Delta t$, $n\ge 1$
and the limit $\Delta t\to 0$ is not considered here.  We will further
assume that the observed random variables $Y_n$ satisfy the model
(\ref{eq:Ymodel}), i.e.
\[
Y_n = h(X_n) + \Xi_n
\]
and the measurement errors $\Xi_n \sim {\rm N}(0,R)$ are mutually
independent with common error covariance matrix $R$.  
We introduce the notation ${\rm Y}_k = \{y_0(t_i)\}_{i=1,\ldots,k}$ to denote 
all observations up to and including time $t_k$.
\begin{Definition*}[Data assimilation] \emph{Data assimilation} is the
  estimation of marginal PDFs $\pi_n(x|{\rm Y}_k)$ of the random
  variable $X_n = X(t_n)$ conditioned on the set of observations ${\rm
    Y}_k$. We distinguish three cases: (i) \emph{filtering} $k=n$, (ii)
  \emph{smoothing} $k>n$, and (iii) \emph{prediction} $k<n$.
\end{Definition*}

The subsequent discussions are restricted to the filtering problem. We
have already seen that evolution of the marginal distributions under
(\ref{eq:EMmethod}) alone is governed by the Chapman-Kolmogorov
equation (\ref{eq:CK}) with transition probability density
(\ref{eq:CKFP}). We denote the associated Frobenius-Perron operator
(\ref{eq:Frobenius}) by ${\cal P}_{\Delta t}$. Given $X_0 \sim \pi_0$,
we first obtain
\[
\pi_1 = {\cal P}_{\Delta t}\pi_0.
\]
This time propagated PDF is used as the prior PDF $\pi_X = \pi_1$ in Bayes' formula (\ref{eq:Bayes})
at $t=t_1$ with $y_0 = y_0(t_1)$ and likelihood
\[
\pi_Y(y|x) = \frac{1} {(2\pi)^{N/2} |R|^{1/2}} \exp \left(-\frac{1}{2}
  (y-h(x))^{\rm T} R^{-1} (y-h(x)) \right).
\]
Bayes' formula implies the posterior PDF
\[
\pi_1(x|{\rm Y}_1) \propto \pi_Y(y_0(t_1)|x) \pi_1(x) ,
\]
where the constant of proportionality depends on $y_0(t_1)$
only. 

\begin{Proposition*}[Sequential filtering]
The filtering problem leads to the recursion
\begin{equation} \label{eq:RRR}
\begin{array}{rcl} 
\pi_{n+1}(\cdot |{\rm Y}_n) &=&
{\cal P}_{\Delta t} \pi_n(\cdot|{\rm Y}_n),\\
\pi_{n+1}(x|{\rm Y}_{n+1}) &\propto& \pi_Y(y_0(t_{n+1})|x)
\,\pi_{n+1}(x|{\rm Y}_n),   \end{array} 
\end{equation}
$n\ge 0$, and $X_n \sim \pi_n(\cdot|{\rm Y}_n)$ solves the filtering
problem at time $t_n$. The constant of proportionality depends on
$y_0(t_{n+1})$ only.
\end{Proposition*}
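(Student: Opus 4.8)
The plan is to establish the recursion (\ref{eq:RRR}) by induction on $n$, recognizing that the two lines of the recursion simply formalize the two operations already developed in the excerpt: free propagation under the model dynamics (the prediction step) and Bayesian conditioning on the newly arrived observation (the analysis step). The base case $n=0$ is precisely the construction carried out immediately before the statement, where $\pi_1 = {\cal P}_{\Delta t}\pi_0$ and $\pi_1(x|{\rm Y}_1) \propto \pi_Y(y_0(t_1)|x)\,\pi_1(x)$; since ${\rm Y}_0$ is the empty observation set we have $\pi_0(\cdot|{\rm Y}_0) = \pi_0$, so both lines hold at $n=0$.

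For the inductive step I would assume that $X_n \sim \pi_n(\cdot|{\rm Y}_n)$ solves the filtering problem at time $t_n$, i.e.\ that $\pi_n(\cdot|{\rm Y}_n)$ is the correct conditional PDF of $X_n$ given all observations ${\rm Y}_n$. The first line of the recursion then follows from the Chapman-Kolmogorov equation (\ref{eq:CK}): conditioning on ${\rm Y}_n$ throughout and using the Markov property of the model dynamics (\ref{eq:EMmethod}), the one-step transition density $\pi(x'|x)$ from (\ref{eq:CKFP}) is independent of the past observations, so propagating $\pi_n(\cdot|{\rm Y}_n)$ forward by ${\cal P}_{\Delta t}$ yields exactly $\pi_{n+1}(\cdot|{\rm Y}_n)$, the predicted PDF of $X_{n+1}$ given observations only up to $t_n$. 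The key point to verify here is that the measurement error $\Xi_{n+1}$ is independent of $X_{n+1}$ conditioned on ${\rm Y}_n$, which follows from the stated mutual independence of the $\Xi_n$ and of the driving noise $Z_n$.

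The second line is then an application of Bayes' formula (\ref{eq:Bayes}) with the predicted PDF $\pi_{n+1}(\cdot|{\rm Y}_n)$ playing the role of the prior $\pi_X$. Since ${\rm Y}_{n+1} = {\rm Y}_n \cup \{y_0(t_{n+1})\}$ and the likelihood of the new observation $Y_{n+1} = h(X_{n+1}) + \Xi_{n+1}$ depends only on $x$ and not on the earlier observations, the conditional likelihood is $\pi_Y(y_0(t_{n+1})|x)$, and Bayes' formula gives
\[
\pi_{n+1}(x|{\rm Y}_{n+1}) \propto \pi_Y(y_0(t_{n+1})|x)\,\pi_{n+1}(x|{\rm Y}_n),
\]
with the constant of proportionality $\pi_{Y_{n+1}}(y_0(t_{n+1})|{\rm Y}_n)^{-1}$ depending on $y_0(t_{n+1})$ only. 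This completes the induction.

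I expect the main obstacle to be bookkeeping rather than analysis: the statement is essentially a structural decomposition, and the real content lies in justifying that observations and dynamical noise decouple cleanly at each step. The step requiring the most care is the first line, where one must confirm that conditioning on ${\rm Y}_n$ commutes with the forward propagation, i.e.\ that the transition density is unaffected by conditioning on past data. This is where the Markov assumption on the model and the mutual independence of the measurement errors $\Xi_n$ are genuinely used, and a rigorous treatment would write out the relevant joint density and marginalize, rather than appealing to the unconditioned Chapman-Kolmogorov equation (\ref{eq:CK}) directly.
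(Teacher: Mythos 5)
Your proof is correct and follows exactly the route the paper intends: the paper's own proof consists of the single sentence ``The recursion follows by induction,'' and your argument is precisely that induction, written out with the prediction step justified by the Chapman--Kolmogorov equation and the analysis step by Bayes' formula, together with the independence assumptions on the dynamical and measurement noise that make the two steps decouple. Nothing further is needed.
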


\begin{proof} The recursion follows by induction.
\end{proof}

Recall that the Frobenius-Perron operator ${\cal P}_{\Delta t}$ is
generated by the stochastic diffference equation
(\ref{eq:EMmethod}). On the other hand, Bayes' formula only leads to a
transition from the predicted $\pi_{n+1}(x|{\rm Y}_n)$ to the filtered
$\pi_{n+1}(x|{\rm Y}_{n+1})$. Following our discussion on transport
maps from Section \ref{sec:OT}, we assume the existence of a transport
map $X' = T_{n+1} (X)$, depending on $y_0(t_{n+1})$, that
couples the two PDFs. The use of optimal transport maps in the context
of Bayesian inference and intermittent data assimilation was first
proposed in \cite{sr:reich10,sr:marzouk11}.

\begin{Proposition*}[Filtering by transport maps] Assuming the
  existence of appropriate transport maps $T_{n+1}$, which couple
  $\pi_{n+1}(x|{\rm Y}_n)$ and $\pi_{n+1}(x|{\rm Y}_{n+1})$, the
  filtering problem is solved by the following recursion for the
  random variables $X_{n+1}$, $n\ge 0$:
\begin{equation} \label{eq:RR}
X_{n+1} = T_{n+1} \left(X_n + \Delta t f(X_n) + \sqrt{2\Delta t}
  Z_n\right) ,
\end{equation}
which gives rise to a time-dependent Markov process.
\end{Proposition*}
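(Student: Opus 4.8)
The plan is to prove the assertion by induction on $n$, showing that the random-variable recursion (\ref{eq:RR}) reproduces, at the level of laws, exactly the two-step PDF recursion (\ref{eq:RRR}) already established in the Sequential filtering proposition. The inductive hypothesis is that $X_n \sim \pi_n(\cdot|{\rm Y}_n)$. The base case $n=0$ holds since $X_0\sim\pi_0$ by assumption and ${\rm Y}_0$ is empty, so $\pi_0(\cdot|{\rm Y}_0)=\pi_0$.

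For the inductive step I would split (\ref{eq:RR}) into the two maps of which it is the composition. First I set the intermediate variable
\[
\widehat X_{n+1} = X_n + \Delta t\, f(X_n) + \sqrt{2\Delta t}\, Z_n,
\]
which is precisely the Euler--Maruyama increment (\ref{eq:EMmethod}). Since $Z_n$ is independent of $X_n$ and carries the law that generates the transition density (\ref{eq:CKFP}), the conditional law of $\widehat X_{n+1}$ given $X_n=x$ is $\pi(\cdot|x)$, and marginalizing against the inductive hypothesis $X_n\sim\pi_n(\cdot|{\rm Y}_n)$ yields, by the very definition of the Frobenius--Perron operator through Chapman--Kolmogorov (\ref{eq:CK})--(\ref{eq:Frobenius}),
\[
\widehat X_{n+1} \sim {\cal P}_{\Delta t}\,\pi_n(\cdot|{\rm Y}_n) = \pi_{n+1}(\cdot|{\rm Y}_n),
\]
which is exactly the first line of (\ref{eq:RRR}).

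Next I would apply the assumed transport map, $X_{n+1}=T_{n+1}(\widehat X_{n+1})$. By hypothesis $T_{n+1}$ is a deterministic coupling of $\pi_{n+1}(\cdot|{\rm Y}_n)$ and $\pi_{n+1}(\cdot|{\rm Y}_{n+1})$, so the defining pushforward property of a transport map, fed the input law $\widehat X_{n+1}\sim\pi_{n+1}(\cdot|{\rm Y}_n)$ just established, delivers the output law
\[
X_{n+1} = T_{n+1}(\widehat X_{n+1}) \sim \pi_{n+1}(\cdot|{\rm Y}_{n+1}),
\]
which is the second (Bayesian) line of (\ref{eq:RRR}) and closes the induction. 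For the Markov claim I note that $X_{n+1}$ depends on the past only through $X_n$ and the fresh, independent noise $Z_n$, with conditional density
\[
\pi(x'|x) = \int_{\mathbb{R}^N} \delta\big(x' - T_{n+1}(\xi)\big)\,\pi(\xi|x)\,{\rm d}\xi ,
\]
where $\pi(\xi|x)$ is as in (\ref{eq:CKFP}); hence the process is Markov in the sense of Section \ref{sec:Markov}, and it is time-dependent since $T_{n+1}$ varies with $n$ through the observation $y_0(t_{n+1})$.

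Once the two ingredients are in place the argument is essentially bookkeeping, so I do not expect a deep obstacle. The step requiring the most care is the verification in the prediction phase that applying the Euler--Maruyama map to a draw from $\pi_n(\cdot|{\rm Y}_n)$ and marginalizing over the noise genuinely realizes ${\cal P}_{\Delta t}$; this hinges on $Z_n$ being independent of $X_n$ and of all earlier states, so that the transition density (\ref{eq:CKFP}) applies unchanged. The existence and well-definedness of $T_{n+1}$ is taken as a hypothesis and is therefore outside what must be shown.
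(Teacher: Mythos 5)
Your argument is correct and is exactly the expansion of what the paper compresses into ``Follows trivially from (\ref{eq:RRR})'': the Euler--Maruyama step realizes ${\cal P}_{\Delta t}$ and the transport map realizes the Bayesian update, so the induction closes. Your additional verification of the Markov property via the composed conditional density is a sound and welcome supplement, but it does not change the route.
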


\begin{proof} Follows trivially from (\ref{eq:RRR}).
\end{proof}

The rest of this section is devoted to several Monte Carlo
methods for sequential filtering. 

%%%%%%%%%%%%%%%%%%%%%%%%%%%%%%%%%%%%%%%

\subsection{Sequential Monte Carlo method} \label{sec:SMCM}

In our framework, a standard sequential Monte Carlo method, also
called \emph{bootstrap particle filter}, may be described as an
ensemble of random variables $X_i$ and associated realizations
(referred to as ``particles'') $x_i = X_i(\omega)$, which follow the
stochastic difference equation (\ref{eq:EMmethod}), choosing the
transport map in (\ref{eq:RR}) to be the identity map. Observational
data is taken into account using importance sampling as discussed in
Section \ref{sec:MC}, i.e., each particle carries a weight $w_i(t_n)$,
which is updated according to Bayes' formula
\[
w_i(t_{n+1}) \propto w_i(t_n) \pi(y_0(t_{n+1})|x_i(t_{n+1})).
\]
The constant of proportionality is chosen such that the new weights $\{w_i(t_{n+1})\}_{i=1}^M$ 
sum to one.

Whenever the particle weights $w_i(t_n)$ start to become highly
non-uniform (or possibly also after each assimilation step) resampling
 is necessary in order to generate a new family of random variables with equal
weights. 

Most available resampling methods start from the
\emph{weighted empirical measure}
\begin{equation} \label{eq:empirical}
\mu_X({\rm d}x) = \sum_{i=1}^M w_i \mu_{x_i}({\rm d}x)
\end{equation}
associated with a set of weighted samples $\{x_i,w_i\}_{i=1}^M$. The
idea is to replace each of the original samples $x_i$ by $\xi_i\ge 0$
offsprings with equal weights $\hat w_i = 1/M$. The distribution of
offsprings is chosen to be equal to the distribution of $M$ samples
(with replacement) drawn at random from the empirical distribution
(\ref{eq:empirical}). In other words, the offsprings
$\{\xi_i\}_{i=1}^M$ follow a \emph{multinomial distribution} defined
by
\begin{equation} \label{eq:multinomial}
\mathbb{P}(\xi_i= n_i, i=1,\ldots,M) = \frac{M!}{\prod_{i=1}^M n_i !}
\prod_{i=1}^M (w_i)^{n_i} 
\end{equation}
with $n_i\ge 0$ such that $\sum_{i=1}^M n_i = M$. 
In practice, independent resampling is often replaced by residual or
systematic resampling. We next summarize residual resampling while we
refer the reader to \cite{sr:arul02} for an algorithmic description of
systematic resampling.

\begin{Definition*}[Residual resampling] \emph{Residual resampling}
  generates
\[
\xi_i = \lfloor M w_i \rfloor + \bar \xi_i,
\]
offsprings of each ensemble member $x_i$ with weight $w_i$,
$i=1,\ldots,M$.  Here $\lfloor x \rfloor$ denotes the integer part of
$x$ and $\bar \xi_i$ follows the multinomial distribution
(\ref{eq:multinomial}) with weights $w_i$ being replaced by
\[
\overline w_i = \frac{Mw_i - \lfloor M w_i \rfloor }{ \sum_{j=1}^M (Mw_j -
  \lfloor M w_j \rfloor )} 
\]
and with a total of 
\[
\sum_{i=1}^M n_i = \overline M := M - \sum_i \lfloor M w_i\rfloor 
\]
independent trials.
\end{Definition*}

In generalization of (\ref{eq:multinomial}),
we introduce the notation ${\rm Mult}(L;\omega_1,\ldots,\omega_M)$ to
denote the multinomial distribution of $L$ independent trials, where
the outcome of each trial is distributed among $M$ possible outcomes according to
probabilities $\{\omega_i\}_{i=1}^M$. The following algorithm draws random
samples from ${\rm Mult}(L;\omega_1,\ldots,\omega_M)$. We first
introduce the generalized inverse cumulative distribution function $F_{\rm
  emp}^{-1} :[0,1] \to \{1,\ldots,M\}$ for the empirical measure
(\ref{eq:empirical}), which is defined by
\[
F_{\rm emp}^{-1} (u) = i \quad \Longleftrightarrow \quad u \in \left(
  \sum_{j=1}^{i-1} \omega_j,\sum_{j=1}^i \omega_i \right] .
\]
We next draw $L$ independent samples $u_l\in [0,1]$ from the uniform
distribution ${\rm U}[0,1]$ and initially set the number of copies
$\bar \xi_i$, $i=1,\ldots,M$, equal to zero. For $l=1,\ldots,L$, we now
increment $\bar \xi_{I_l}$ by one for indices $I_l \in \{1,\ldots,M \}$, $l=1,\ldots,L$,
defined by 
\[
I_l = F_{\rm emp}^{-1} (u_l) 
= \arg \min_{i\ge 1} \sum_{j=1}^i
\omega_j \ge u_l .
\]

Both independent and residual resampling can be viewed as providing a coupling
between the empirical measure (\ref{eq:empirical}) will all weights
being equal to $w_i = 1/M$ and the target measure (\ref{eq:empirical})
with identical samples $\{x_i\}$ but non-uniform weights. Clearly
residual resampling provides a coupling with a smaller transport cost. 
This can already be concluded from the trivial case of equal weights
in the target measure in which case residual resampling reduces to the
identity map with zero transport cost while independent resampling remains
non-deterministic and produces a non-zero transport cost. The following example outlines
the optimal transportation perspective on resampling more precisely for two discrete, univariate 
random variables.

\begin{Example*}[Coupling discrete random variables]
Let us consider two discrete, univariate random variables $X_i:\Omega \to {\cal X}$,
$i=1,2$, with target set
\[
{\cal X} = \{x_1,x_2,\ldots,x_M\} \in \mathbb{R}^M.
\]
We furthermore assume that 
\[
\mathbb{P}(X_1(\omega) = x_i) = 1/M, \qquad 
\mathbb{P}(X_2(\omega) = x_i) = w_i 
\]
for given probabilities/weights $w_i\ge 0$, $i=1,\ldots,M$. Any coupling of $X_1$ and $X_2$ is characterized
by a matrix ${\cal T} \in \mathbb{R}^{M\times M}$ such that $t_{ij} =
({\cal T})_{ij}\ge 0$ and
\[
\sum_{i=1}^M t_{ij} = 1/M, \qquad \sum_{j=1}^M t_{ij} = w_i
\]
Given a coupling ${\cal T}$ and the mean values 
\[
\bar x_1 = \frac{1}{M}\sum_i x_i, \qquad \bar x_2 = \sum_i w_i x_i
\]
the covariance between $X_1$ and $X_2$ is defined by
\[
\mbox{cov}(X_1,X_2) = \sum_{i,j} (x_i-\bar x_2) t_{ij} (x_j-\bar x_1).
\]
The induced Markov transistion matrix from $X_1$ to $X_2$ is simply given by $M{\cal T}$.
Independent resampling corresponds to $t_{ij} = w_i/M$ and leads to a zero
correlation between $X_1$ and $X_2$. On the other hand, maximizing the correlation results
in a linear programming problem for the $M^2$ unknowns $\{t_{ij}\}$. Its solution then also defines 
the solution to the associated optimal transportation problem (\ref{eq:MK}).
\end{Example*}

More generally, sequential Monte Carlo methods differ by
the way resampling is implemented and also in the choice of proposal
step which in our context amounts to choosing transport maps $T_{n+1}$
in (\ref{eq:RR}) which are different from the identity map. See also
the discussion in Section \ref{sec:IS} below.

%%%%%%%%%%%%%%%%%%%%%%%%%%%%%%%%%%%%%%%%%%%%%

\subsection{Ensemble Kalman filter (EnKF)}
\label{s:enkf}

We now introduce an alternative to sequential Monte Carlo methods
which has become hugely popular in the geophysical community in recent
years. The idea is to construct a simple but robust transport map
$T'_{n+1}$ which replaces $T_{n+1}$ in (\ref{eq:RR}). This transport
map is based on the \emph{Kalman update
equations} for linear SDEs and Gaussian prior and posterior
distributions.  We recall the standard Kalman filter update equations.

\begin{Proposition*}[Kalman update for Gaussian distributions] Let the
  prior distribution $\pi_X$ be Gaussian with mean $\bar x^f$ and
  covariance matrix $P^f$. Observations $y_0$ are assumed to follow
  the linear model 
\[
Y = HX + \Xi,
\]
where $\Xi \sim {\rm N}(0,R)$ and $R$ is a symmetric,
positive-definite matrix. Then the posterior distribution
$\pi_X(x|y_0)$ is also Gaussian with mean
\begin{equation} \label{eq:Kalman1}
\bar x^a = \bar x^f - P^fH^{\rm T} (HP^f H^{\rm T} + R)^{-1} (H\bar x^f - y_0)
\end{equation}
and covariance matrix
\begin{equation} \label{eq:Kalman2}
P^a = P^f - P^f H^{\rm T} (HP^f H^{\rm T} + R)^{-1} H P^f.
\end{equation}
Here we adopt the standard meteorological notation with superscript $f$
(forecast) denoting prior statistics, and superscript $a$ (analysis)
denoting posterior statistics.
\end{Proposition*}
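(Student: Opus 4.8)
The plan is to apply Bayes' formula \eqref{eq:Bayes} directly and to recognise the posterior as a Gaussian by inspection of its exponent. Since the prior $\pi_X$ is Gaussian and the likelihood is $\pi_Y(y_0|x) = \pi_\Xi(y_0 - Hx)$ with $\Xi \sim {\rm N}(0,R)$, Bayes' formula yields
\[
\pi_X(x|y_0) \propto \exp\left( -\frac{1}{2}(x-\bar x^f)^{\rm T}(P^f)^{-1}(x-\bar x^f) -\frac{1}{2}(Hx-y_0)^{\rm T}R^{-1}(Hx-y_0) \right),
\]
where the constant of proportionality collects all factors independent of $x$. The exponent is a quadratic form in $x$, so the posterior is automatically Gaussian, and it only remains to extract its mean and covariance.

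First I would collect the terms quadratic in $x$; their coefficient identifies the posterior precision as
\[
(P^a)^{-1} = (P^f)^{-1} + H^{\rm T} R^{-1} H .
\]
Completing the square in $x$ characterises the posterior mean $\bar x^a$ as the minimiser of the quadratic $\tfrac{1}{2}(x-\bar x^f)^{\rm T}(P^f)^{-1}(x-\bar x^f) + \tfrac{1}{2}(Hx-y_0)^{\rm T}R^{-1}(Hx-y_0)$, equivalently as the solution of
\[
(P^a)^{-1}\bar x^a = (P^f)^{-1}\bar x^f + H^{\rm T} R^{-1} y_0 .
\]
This is the \emph{information form} of the update; the content of the proposition is its equivalence with the gain form \eqref{eq:Kalman1}--\eqref{eq:Kalman2}.

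The main obstacle is then purely algebraic, namely rewriting the information form in terms of the \emph{Kalman gain} $K = P^f H^{\rm T}(HP^fH^{\rm T}+R)^{-1}$. For the covariance I would invoke the Sherman--Morrison--Woodbury identity
\[
\left( (P^f)^{-1} + H^{\rm T} R^{-1} H \right)^{-1} = P^f - P^f H^{\rm T}(HP^fH^{\rm T}+R)^{-1} H P^f ,
\]
which is exactly \eqref{eq:Kalman2} and is legitimate because $P^f$ and $R$ are symmetric positive-definite, so every inverse in sight exists. For the mean I would combine this with the push-through identity $P^a H^{\rm T} R^{-1} = K$ to obtain $\bar x^a = P^a(P^f)^{-1}\bar x^f + K y_0 = \bar x^f - K(H\bar x^f - y_0)$, which is \eqref{eq:Kalman1}. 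An alternative, equally viable route is to form the joint Gaussian distribution of $(X,Y)$, noting that $Y=HX+\Xi$ has covariance $HP^fH^{\rm T}+R$ and cross-covariance $P^fH^{\rm T}$ with $X$, and then to apply the multivariate generalisation of the conditional-Gaussian formula \eqref{Gcond1}; this delivers \eqref{eq:Kalman1}--\eqref{eq:Kalman2} at once, the remaining work being the vector analogue of that scalar formula.
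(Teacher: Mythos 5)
Your proof is correct, but it follows a different route from the paper. The paper disposes of this proposition in one line, by ``straightforward generalization to vector-valued observations'' of the scalar example in Section \ref{sec:Bayes}; that example in turn rests on conditioning the joint Gaussian of $(X,Y)$, using that $Y=HX+\Xi$ has variance $HP^fH^{\rm T}+R$ and cross-covariance $P^fH^{\rm T}$ with $X$, and then reading off the conditional mean and covariance from the multivariate analogue of \eqref{Gcond1}. This is precisely the ``alternative, equally viable route'' you sketch in your final sentences, so your closing remark is in fact the paper's entire argument. Your primary route --- Bayes' formula, completion of the square to obtain the information form $(P^a)^{-1}=(P^f)^{-1}+H^{\rm T}R^{-1}H$, then Sherman--Morrison--Woodbury and the push-through identity $P^aH^{\rm T}R^{-1}=K$ to recover the gain form --- is sound and self-contained: the quadratic exponent immediately certifies Gaussianity of the posterior, positive definiteness of $P^f$ and $R$ guarantees every inverse you need, and the algebra checks out. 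What your approach buys is the information form itself (which connects directly to the Tikhonov-regularization viewpoint the paper mentions) and independence from the joint-Gaussian conditioning lemma; what the paper's approach buys is brevity and a direct link to the covariance bookkeeping ($\sigma_{yy}^2$, $PH^{\rm T}$) that reappears later in the ensemble Kalman filter section. Note also that the paper itself invokes the Sherman--Morrison--Woodbury formula only later, in deriving the square-root transform \eqref{eq:Stransform}, so your use of it here is consistent with the paper's toolkit.
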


\begin{proof} By straightforward generalization to vector-valued
  observations of the case of a scalar observation already discussed
  in Section \ref{sec:Bayes}.
\end{proof}

EnKFs rely on the assumption that the predicted PDF $\pi_{n+1}(x|{\rm
  Y}_n)$ is approximately Gaussian. The ensemble $\{x_i\}_{i=1}^M$ of
model states is used to estimate the mean and the covariance matrix
using the empirical estimates (\ref{eq:EME}) and (\ref{eq:ECM}),
respectively. The key novel idea of EnKFs is to then interpret the
posterior mean and covariance matrix in terms of appropriately
adjusted ensemble positions. This adjustment can be thought of as a
coupling of the underlying prior and posterior random variables of
which the ensembles are realizations.  The original EnKF
\cite{sr:burgers98} uses perturbed observations to achieve the desired
coupling.

\begin{Definition*}[Ensemble Kalman Filter]
  The \emph{EnKF with perturbed observations} for a linear observation
  operator $h(x) = Hx$ is given by
\begin{align} \label{eq:EnKF1a}
X_{n+1}^f &= X_n + \Delta t f(X_n) + \sqrt{2\Delta
  t} Z_n,\\ \label{eq:EnKF1b}
X_{n+1} &= X_{n+1}^f - P_{n+1}^fH^{\rm T}(HP_{n+1}^fH^{\rm T} + R)^{-1}
(HX_{n+1}^f - y_0 + \Sigma_{n+1}),
\end{align}
where the random variables $Z_n \sim {\rm N}(0,Q)$, $\Sigma_{n+1} \sim
{\rm N}(0,R)$ are the mutually independent perturbations to the
observations, $y_0 = y_0(t_{n+1})$, $\bar x_{n+1}^f
=\mathbb{E}_{X_{n+1}^f}[x]$, and
\[
P_{n+1}^f =
\mathbb{E}_{X_{n+1}^f}[(x-\bar x_{n+1}^f)(x-\bar x_{n+1}^f)^{\rm T}].
\]
\end{Definition*}

Next, we investigate the properties of the assimilation step
(\ref{eq:EnKF1b}).

\begin{Proposition*}[EnKF consistency] The EnKF update step
  (\ref{eq:EnKF1b}) propagates the mean and covariance matrix of $X$
  in accordance with the Kalman filter equations for Gaussian PDFs.
\end{Proposition*}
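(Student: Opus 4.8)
The plan is to verify by direct moment computation that the affine update (\ref{eq:EnKF1b}) reproduces the Kalman mean (\ref{eq:Kalman1}) and covariance (\ref{eq:Kalman2}), with $\bar x^f$ and $P^f$ replaced by the ensemble forecast statistics $\bar x_{n+1}^f$ and $P_{n+1}^f$. Since the claim concerns only the first two moments, no Gaussianity of $X_{n+1}^f$ is needed; the argument is pure linear algebra exploiting the independence and centredness of the observation perturbation $\Sigma_{n+1}$. First I would abbreviate the Kalman gain as $K = P_{n+1}^f H^{\rm T}(HP_{n+1}^f H^{\rm T} + R)^{-1}$, so that (\ref{eq:EnKF1b}) reads $X_{n+1} = X_{n+1}^f - K(HX_{n+1}^f - y_0 + \Sigma_{n+1})$. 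Taking expectations and using $\mathbb{E}[\Sigma_{n+1}] = 0$ then gives $\mathbb{E}[X_{n+1}] = \bar x_{n+1}^f - K(H\bar x_{n+1}^f - y_0)$, which is precisely (\ref{eq:Kalman1}).

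For the covariance I would subtract the mean to form the fluctuation $X_{n+1} - \mathbb{E}[X_{n+1}] = (I - KH)(X_{n+1}^f - \bar x_{n+1}^f) - K\Sigma_{n+1}$. Forming the outer product and taking expectations, the cross terms between $X_{n+1}^f - \bar x_{n+1}^f$ and $\Sigma_{n+1}$ vanish because the perturbation is centred and independent of the forecast ensemble. This leaves the Joseph-form identity $P_{n+1} = (I - KH)P_{n+1}^f (I - KH)^{\rm T} + KRK^{\rm T}$.

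The one step requiring care is collapsing this expression to (\ref{eq:Kalman2}). Expanding yields $P_{n+1}^f - KHP_{n+1}^f - P_{n+1}^f H^{\rm T}K^{\rm T} + K(HP_{n+1}^f H^{\rm T} + R)K^{\rm T}$, and the decisive observation is the defining identity $K(HP_{n+1}^f H^{\rm T} + R) = P_{n+1}^f H^{\rm T}$, which turns the last term into $P_{n+1}^f H^{\rm T}K^{\rm T}$ and cancels it against the third. What remains is $P_{n+1} = (I - KH)P_{n+1}^f = P_{n+1}^f - P_{n+1}^f H^{\rm T}(HP_{n+1}^f H^{\rm T} + R)^{-1}HP_{n+1}^f$, matching (\ref{eq:Kalman2}). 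I expect this cancellation to be the only real obstacle; everything else is routine bookkeeping with the linearity of expectation.
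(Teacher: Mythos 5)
Your proof is correct and follows essentially the same route as the paper: take expectations to recover the Kalman mean, form the deviation $(I-KH)\delta X^f_{n+1} - K\Sigma_{n+1}$, use independence and centredness of the perturbation to kill the cross terms, and collapse the quadratic terms via the gain identity $K(HP^f_{n+1}H^{\rm T}+R)=P^f_{n+1}H^{\rm T}$. The paper expands the outer product directly rather than passing through the named Joseph form, but the algebra is identical.
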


\begin{proof}
  It is easy to verify that the ensemble mean satisfies
\[
\bar x_{n+1} = \bar x_{n+1}^f -  P_{n+1}^fH^{\rm T}(HP_{n+1}^fH^{\rm T} + R)^{-1}
(H\bar x_{n+1}^f - y_0 ),
\]
which is consistent with the Kalman filter update for the ensemble
mean. Furthermore, the deviation $\delta X = X-\bar x$ satisfies
\[
\delta X_{n+1} = \delta X_{n+1}^f - P_{n+1}^fH^{\rm
  T}(HP_{n+1}^fH^{\rm T} +
R)^{-1} (H\delta X_{n+1}^f + \Sigma_{n+1}),
\]
which implies
\begin{align*}
P_{n+1} &= P_{n+1}^f - 2 P_{n+1}^f H^{\rm T}
(HP_{n+1}^f H^{\rm T} + R)^{-1} H P_{n+1}^f + \\
& \qquad P_{n+1}^f H^{\rm T} 
(HP_{n+1}^f H^{\rm T} + R)^{-1} R (HP_{n+1}^f H^{\rm T} + R)^{-1} H P_{n+1}^f  + \\
&\qquad (HP_{n+1}^f H^{\rm T} + R)^{-1} H P_{n+1}^f H^{\rm T}
(HP_{n+1}^f H^{\rm T} +
R)^{-1} H P_{n+1}^f \\
&= P_{n+1}^f -  P_{n+1}^f  H^{\rm T} (HP_{n+1}^f H^{\rm T} + R)^{-1} H P_{n+1}^f 
\end{align*}
for the update of the covariance matrix, which is also consistent with
the Kalman update step for Gaussian random variables.
\end{proof}
 
Practical implementations of the EnKF with perturbed observations
replace the exact mean and covariance matrix by ensemble based
empirical estimates (\ref{eq:EME}) and (\ref{eq:ECM}), respectively.

Alternatively, we can derive a transport map $T$ under the assumption
of Gaussian prior and posterior distributions as follows. Using the
empirical ensemble mean $\bar x$ we define ensemble deviations by
$\delta x_i = x_i - \bar x \in \mathbb{R}^N $ and an associated
ensemble deviation matrix $\delta {\rm X} = (\delta x_1,\ldots,\delta
x_M) \in \mathbb{R}^{N\times M}$.  Using the notation, the empirical
covariance matrix of the prior ensemble at $t_{n+1}$ is then given by
\[
P_{n+1}^f = \frac{1}{M-1} \delta {\rm X}_{n+1}^f \,(\delta {\rm
  X}_{n+1}^f)^{\rm T}
\]
We next seek a matrix $S\in \mathbb{R}^{M\times M}$ such that
\[
P_{n+1} = \frac{1}{M-1} \delta {\rm X}_{n+1}^f S S^{\rm T} (\delta {\rm
  X}_{n+1}^f)^{\rm T},
\]
where the rows of $S$ sum to zero in order to preserve the zero mean
property of $\delta {\rm X}_{n+1} = \delta {\rm X}_{n+1}^f S$.  Such
matrices do exist (see \emph{e.g.} \cite{sr:evensen}) and give rise the \emph{ensemble square
root filters}. More specifically, Kalman's update formula
(\ref{eq:Kalman2}) for the posterior covariance matrix implies 
\begin{align*}
P^a 
& = \frac{1}{M-1} \delta {\rm X}^f \left\{ I -  \frac{1}{M-1} (\delta {\rm
    Y}^f)^{\rm T}  \left[ H P^f H^{\rm T} +R \right]^{-1}  \delta {\rm
  Y}^f \right\} (\delta {\rm X}^f)^{\rm T} \\
&= \frac{1}{M-1} \delta {\rm X}^f S S^{\rm T} (\delta {\rm
  X}^f)^{\rm T} ,
\end{align*}
where we have dropped the time index subscript and introduced the
ensemble perturbations $\delta {\rm Y}^f = H \delta {\rm X}^f$ in
observation space ${\cal Y}$. Recalling now the definition of a matrix
square root from Section \ref{sec:OT} and making use of the
Sherman-Morrison-Woodbury formula \cite{sr:golub}, we find that
\begin{align} \nonumber
S &=  \left\{ I - \frac{1}{M-1}
  (\delta {\rm Y}^f)^{\rm T}   \left[ H P^f H^{\rm T} + R \right]^{-1}   \delta {\rm
  Y}^f \right\}^{1/2} \\
&= \left\{ I + \frac{1}{M-1}  (\delta {\rm Y}^f)^{\rm T} R^{-1}  \delta {\rm
  Y}^f  \right\}^{-1/2}. \label{eq:Stransform}
\end{align}
The complete ensemble update of an ensemble square root
filter is then given by
\begin{equation} \label{eq:EnKF2a}
x_i(t_{n+1}) = \bar x_{n+1} + \delta X_{n+1}^f S e_i,
\end{equation}
where $e_i$ denotes the $i$th basis vector in
$\mathbb{R}^M$ and
\begin{equation*} %\label{eq:EnKF2b}
\bar x_{n+1} = \bar x^f_{n+1} - P_{n+1}^f H^{\rm T} (HP_{n+1}^fH^{\rm T} + R)^{-1} (H\bar x^f_{n+1} -
y_0(t_{n+1}))
\end{equation*}
denotes the updated ensemble mean. 

We now discuss the update (\ref{eq:EnKF2a}) from the perspective
of optimal transportation which in our context reduces to finding a
matrix $S_{\rm OT} \in \mathbb{R}^{M\times M}$ such that the trace of 
\begin{equation*}
\mbox{cov}(\delta X_{n+1}^f,\delta X_{n+1}) = \mathbb{E}[\delta
X_{n+1}^f S_{\rm OT}^{\rm T} (\delta X_{n+1}^f)^{\rm T})
\end{equation*}
is maximized. 

\begin{Proposition*}[Optimal update for ensemble square root filter]
The trace of the covariance matrix ${\rm cov}(\delta X_{n+1}^f,\delta
X_{n+1})$ is maximized for
\[
\delta X_{n+1} = \delta X_{n+1}^f S_{\rm OT}
\]
with transform matrix
\[
S_{\rm OT} = \frac{1}{\sqrt{M-1}} S \left[ S (\delta X_{n+1}^f)^{\rm T} P^f \delta X_{n+1}^f S
  \right]^{-1/2} S (\delta X_{n+1}^f)^{\rm T} \delta X_{n+1}^f
\]
and $S \in \mathbb{R}^{M\times M}$ given by (\ref{eq:Stransform}).
\end{Proposition*}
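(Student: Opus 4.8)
The plan is to exploit the non-uniqueness of the square-root transform already visible in \eqref{eq:Stransform}: any $S' = SQ$ with $Q\in\mathbb{R}^{M\times M}$ orthogonal leaves the posterior covariance $\tfrac{1}{M-1}\delta X_{n+1}^f\, S S^{\rm T}(\delta X_{n+1}^f)^{\rm T}$ unchanged, exactly as the orthogonal factor $Q$ in the affine Gaussian coupling following \eqref{eq:Gcoupling}. Hence the admissible transforms form the orbit $\{SQ : Q^{\rm T}Q = I\}$, where $S$ is the symmetric square root \eqref{eq:Stransform}, and the task reduces to selecting the member of this orbit that maximises the stated trace. Maximising this trace is, by the geometric interpretation leading to \eqref{eq:MK}, the same as minimising the mean square distance between prior and posterior deviations, so the optimum is an optimal-transport coupling.

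First I would rewrite the objective. Writing $\delta X_{n+1} = \delta X_{n+1}^f S_{\rm OT}$ and $B := (\delta X_{n+1}^f)^{\rm T}\delta X_{n+1}^f$, the cross-covariance is $\mathrm{cov}(\delta X_{n+1}^f,\delta X_{n+1}) = \tfrac{1}{M-1}\delta X_{n+1}^f S_{\rm OT}^{\rm T}(\delta X_{n+1}^f)^{\rm T}$, so by the cyclic property of the trace
\[
\mathrm{tr}\,\mathrm{cov}(\delta X_{n+1}^f,\delta X_{n+1}) = \tfrac{1}{M-1}\,\mathrm{tr}\!\left(S_{\rm OT}^{\rm T} B\right).
\]
Substituting $S_{\rm OT} = SQ$ this becomes $\tfrac{1}{M-1}\,\mathrm{tr}(Q^{\rm T}SB) = \tfrac{1}{M-1}\,\mathrm{tr}(DQ^{\rm T})$ with $D := SB$. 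Maximising $\mathrm{tr}(DQ^{\rm T})$ over the orthogonal group is the classical trace-maximisation (orthogonal Procrustes) problem, and this is the heart of the argument: from the singular value decomposition $D = U\Sigma_D V^{\rm T}$ one reads off $\mathrm{tr}(DQ^{\rm T})\le \sum_i \sigma_i(D)$, with equality attained at the orthogonal polar factor $Q_\ast = UV^{\rm T}$.

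It then remains to substitute back and simplify. Using either one-sided form of the polar factor, $Q_\ast = D(D^{\rm T}D)^{-1/2} = (DD^{\rm T})^{-1/2}D$, together with $DD^{\rm T} = SB\cdot BS = SB^2S$, gives $Q_\ast = (SB^2S)^{-1/2}SB$ and hence $S_{\rm OT} = SQ_\ast = S(SB^2S)^{-1/2}SB$. Finally, inserting $(\delta X_{n+1}^f)^{\rm T}P^f\delta X_{n+1}^f = \tfrac{1}{M-1}B^2$, which follows from the definition \eqref{eq:ECM} of $P^f = \tfrac{1}{M-1}\delta X_{n+1}^f(\delta X_{n+1}^f)^{\rm T}$, turns $(SB^2S)^{-1/2}$ into $\sqrt{M-1}\,[\,S(\delta X_{n+1}^f)^{\rm T}P^f\delta X_{n+1}^f S\,]^{-1/2}$; the prefactor $\tfrac{1}{\sqrt{M-1}}$ absorbs this constant and yields precisely the claimed $S_{\rm OT}$. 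The resulting expression has the same $A[A^{\rm T}\Sigma A]^{-1/2}A^{\rm T}$ structure as the optimal Gaussian coupling \eqref{eq:OGcoupling2}, which serves as a useful consistency check.

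The main obstacle I anticipate is the rank deficiency of the ensemble quantities: since the columns of $\delta X_{n+1}^f$ sum to zero, the constant vector $\mathbf{1}$ lies in the kernel of $B$ (hence of $D$ and $D^{\rm T}$), so $(SB^2S)^{-1/2}$ and $(D^{\rm T}D)^{-1/2}$ must be interpreted as pseudo-inverses on $\mathbf{1}^{\perp}$. One must therefore check that the admissible $Q$ fix the $\mathbf{1}$ direction, so that the mean-preserving property behind \eqref{eq:EnKF2a} is respected, and verify that the Procrustes optimum complies automatically — which it does precisely because $D\mathbf{1} = D^{\rm T}\mathbf{1} = 0$. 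A secondary technical point is justifying the two one-sided forms of $Q_\ast$, but these coincide for any matrix by uniqueness of the orthogonal polar factor, so no termwise manipulation is needed.
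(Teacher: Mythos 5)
Your derivation is correct and reaches exactly the stated $S_{\rm OT}$, but it proceeds by a genuinely different route than the paper. The paper's proof is a one-line reduction: it invokes the generalized Gaussian optimal-coupling formula \eqref{eq:OGcoupling2} with $A = \delta X_{n+1}^f S/\sqrt{M-1}$ (so that $AA^{\rm T} = P^a$) and $\Sigma_1 = P^f$, and then rewrites the resulting left-multiplication of $\delta X_{n+1}^f$ as a right-multiplication by an $M\times M$ matrix. You instead give a self-contained argument: you parametrize the admissible square-root transforms as the orbit $\{SQ : Q^{\rm T}Q = I\}$ (valid since the $S$ of \eqref{eq:Stransform} is symmetric positive definite, hence invertible), reduce the trace maximization to an orthogonal Procrustes problem for $D = SB$ with $B=(\delta X_{n+1}^f)^{\rm T}\delta X_{n+1}^f$, and solve it with the polar factor $Q_\ast=(DD^{\rm T})^{-1/2}D$; the identity $(\delta X_{n+1}^f)^{\rm T}P^f\delta X_{n+1}^f = \tfrac{1}{M-1}B^2$ then delivers the stated formula, with the $\tfrac{1}{\sqrt{M-1}}$ prefactor accounted for correctly. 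What the paper's route buys is brevity and a direct link to the Monge--Kantorovich optimum over \emph{all} couplings of the two Gaussians (deferring the real work to the cited result \cite{sr:olkin82}); what your route buys is an elementary, verifiable proof of optimality \emph{within the class of square-root transforms} $\delta X_{n+1}^f SQ$, plus an explicit treatment of the rank-deficiency along $\mathbf{1}$ and of mean preservation, which the paper glosses over. The only caveat worth stating if you write this up is that your argument, as given, establishes optimality over the orbit $\{SQ\}$ rather than over all transference plans; for Gaussian marginals these optima coincide (the Monge--Kantorovich map is linear), and in the ensemble square root filter setting the restricted class is the natural one, but the logical step deserves a sentence.
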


\begin{proof}
Follows from (\ref{eq:OGcoupling2}) with $A = \delta X_{n+1}^f
S/\sqrt{M-1}$ and $\Sigma_1 = P^f$. The left multiplication in
(\ref{eq:OGcoupling}) is finally rewritten as a right multiplication by
$S_{\rm OT} \in \mathbb{R}^{M\times M}$ in terms of ensemble deviations $\delta X_{n+1}^f$. 
\end{proof}

We finish this section by brief discussions on a couple of practical issues.
It is important to recall that the Kalman filter can be viewed as a
linear minimum variance estimator \cite{sr:duncan72}. At the same time 
it has been noted \cite{sr:xiong06,sr:lei11} that the updated ensemble
mean $\bar x_{n+1}$ is biased in case the prior distribution is not Gaussian.
Hence the associated mean squared error (\ref{eq:MSE}) does not vanish
as $M\to \infty$ even though the variance of the estimator goes to
zero. If desired the bias can be removed by replacing $\bar x_{n+1}$ in (\ref{eq:EnKF2a}) by
(\ref{meanposterior}) with weights (\ref{weightsposterior}), where
$y_0 = y_0(t_{n+1})$ and $x_i^{\rm  prior} =
x_i^f(t_{n+1})$. Higher-order moment corrections can also be
implemented \cite{sr:xiong06,sr:lei11}. However, the filter
performance only improves for sufficiently large ensemble sizes.

We mention the \emph{unscented Kalman filter} \cite{sr:Julier97anew} 
as an alternative extension of the Kalman filter to nonlinear
dynamical systems. We also mention the \emph{rank histogram filter} \cite{sr:anderson10},
which is based on first constructing an approximative coupling in the observed
variable $y$ alone followed by linear regression of the updates in
$y$ onto the state space variable $x$. 

Practical implementations of EnKFs for high-dimensional problem rely
on additional modifications, in particular \emph{inflation} and
\emph{localization}. While localization modifies the covariance matrix
$P^f$ in the Kalman update (\ref{eq:EnKF1b}) in order to increase its
rank and to localize the spatial impact of observations in physical
space, inflation increases the ensemble spread $\delta x_i = x_i -
\bar x$ by replacing $x_i$ by $\bar x + \alpha (x_i-\bar x)$ with
$\alpha > 1$.  Note that the second term on the righthand side of
(\ref{eq:inflation}) achieves a similar effect and ensemble inflation
can be viewed as simple parametrization of (stochastic) model
errors. See \cite{sr:evensen} for more details on inflation and
localization techniques.

%%%%%%%%%%%%%%%%%%%%%%%%%%%%%%%%%%%%%%%%%%%%%%%%%%%%%%%

\subsection{Ensemble transform Kalman-Bucy filter} \label{sec:KB}

In this section, we describe an alternative implementation of ensemble square root filters based
on the Kalman-Bucy filter. We first describe the Kalman-Bucy formulation of the linear filtering
problem for Gaussian PDFs.

\begin{Proposition*}[Kalman-Bucy equations]
  The Kalman update step (\ref{eq:Kalman1})-(\ref{eq:Kalman2}) can be
  formulated as a differential equation in artificial time $s\in
  [0,1]$. The Kalman-Bucy equations are
\[
\frac{{\rm d}\bar x}{{\rm d}s} = -PH^{\rm T} R^{-1} (H\bar x - y_0)
\]
and
\[
\frac{{\rm d}P}{{\rm d}s} = -  PH^{\rm T} R^{-1} HP.
\]
The initial conditions are $\bar x(0) = \bar x^f$ and $P(0) = P^f$ and
the Kalman update is obtained from the final conditions $\bar x^a =
\bar x(1)$ and $P^a = P(1)$.
\end{Proposition*}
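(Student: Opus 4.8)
The plan is to solve the two ordinary differential equations explicitly on $[0,1]$ and verify that the resulting final conditions reproduce the Kalman update formulas \eqref{eq:Kalman1}--\eqref{eq:Kalman2}. The covariance equation is autonomous, so I would treat it first. Rather than solving the matrix Riccati equation for $P$ directly, I would pass to the inverse (information) matrix $P^{-1}$. Differentiating $P P^{-1} = I$ and substituting the given equation for ${\rm d}P/{\rm d}s$ yields
\[
\frac{{\rm d}P^{-1}}{{\rm d}s} = -P^{-1} \frac{{\rm d}P}{{\rm d}s} P^{-1} = H^{\rm T} R^{-1} H,
\]
a constant. Integrating gives $P(s)^{-1} = (P^f)^{-1} + s\, H^{\rm T} R^{-1} H$, so that $P(1)^{-1} = (P^f)^{-1} + H^{\rm T} R^{-1} H$. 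The Sherman-Morrison-Woodbury formula then converts this information-form expression into the covariance-form update, recovering $P^a$ exactly as in \eqref{eq:Kalman2}.

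For the mean, the equation is linear in $\bar x$ but nonautonomous through $P(s)$. The key observation is to track the combination $P^{-1}\bar x$ rather than $\bar x$ itself. Using the product rule together with the two evolution equations, the terms involving $H\bar x$ cancel and one finds
\[
\frac{{\rm d}}{{\rm d}s}\left( P^{-1} \bar x \right) = H^{\rm T} R^{-1} H \bar x + P^{-1}\left( -P H^{\rm T} R^{-1}(H\bar x - y_0)\right) = H^{\rm T} R^{-1} y_0,
\]
again constant in $s$. Integrating yields $P(1)^{-1}\bar x(1) = (P^f)^{-1}\bar x^f + H^{\rm T} R^{-1} y_0$, and hence $\bar x(1) = P^a\left( (P^f)^{-1}\bar x^f + H^{\rm T} R^{-1} y_0 \right)$.

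It remains to check that this last expression coincides with $\bar x^a$ in \eqref{eq:Kalman1}, which is the step requiring the most care. I would split $\bar x(1)$ into the two terms $P^a (P^f)^{-1} \bar x^f$ and $P^a H^{\rm T} R^{-1} y_0$. For the first, the covariance-form expression for $P^a$ gives $P^a (P^f)^{-1} = I - P^f H^{\rm T}(HP^f H^{\rm T} + R)^{-1} H$. For the second, the identity $P^a H^{\rm T} R^{-1} = P^f H^{\rm T}(HP^f H^{\rm T} + R)^{-1}$, the equivalence of the two forms of the Kalman gain, is needed; I would establish it by factoring out $P^f H^{\rm T}(HP^f H^{\rm T}+R)^{-1}$ and simplifying $(HP^fH^{\rm T}+R)R^{-1} - HP^fH^{\rm T}R^{-1} = I$. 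Combining the two terms collapses $\bar x(1)$ into $\bar x^f - P^f H^{\rm T}(HP^fH^{\rm T}+R)^{-1}(H\bar x^f - y_0)$, which is precisely $\bar x^a$. The main obstacle is thus purely algebraic: correctly shuttling between the information form $(P^f)^{-1} + H^{\rm T}R^{-1}H$ naturally produced by the ODEs and the covariance form appearing in the Kalman equations, for which the Sherman-Morrison-Woodbury identity and the Kalman-gain equivalence are the essential tools.
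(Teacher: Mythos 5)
Your proposal is correct, and it proves the statement by a genuinely different route from the paper. The paper works backwards from the discrete update: it shows (for the scalar case $N=K=1$) that one Kalman analysis step with error covariance $R$ equals two successive steps with $2R$, extends this by induction to $D$ steps with $DR$, introduces $\Delta s = 1/D$, and obtains the two differential equations as the formal continuum limit $\Delta s \to 0$; the matrix-valued case is left implicit. You instead take the ODEs as given and integrate them exactly in information form: $P^{-1}(s)$ and $P^{-1}(s)\bar x(s)$ both evolve with constant derivatives $H^{\rm T}R^{-1}H$ and $H^{\rm T}R^{-1}y_0$, so $P(1)^{-1} = (P^f)^{-1} + H^{\rm T}R^{-1}H$ and $P(1)^{-1}\bar x(1) = (P^f)^{-1}\bar x^f + H^{\rm T}R^{-1}y_0$, and the Sherman--Morrison--Woodbury identity together with the gain identity $P^aH^{\rm T}R^{-1} = P^fH^{\rm T}(HP^fH^{\rm T}+R)^{-1}$ converts these back to \eqref{eq:Kalman1}--\eqref{eq:Kalman2}. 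All of your algebra checks out. What your argument buys is a rigorous, fully general matrix-valued verification (the explicit formula $P(s) = [(P^f)^{-1}+sH^{\rm T}R^{-1}H]^{-1}$ also settles existence and positive definiteness of $P(s)$ on $[0,1]$, modulo the standing assumption that $P^f$ is invertible, which the information form requires). What it loses is the interpretation the paper is really after: the flow in $s$ as the continuum limit of $D$-fold incremental Bayesian updates with tempered likelihoods, which is precisely the idea reused in the later section on continuous ensemble transform filter formulations.
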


\begin{proof} We present the proof for $N=1$ (one dimensional state
  space) and $K=1$ (a single observation). Under this assumption, the
  standard Kalman analysis step (\ref{eq:Kalman1})-(\ref{eq:Kalman2})
  gives rise to
\begin{equation*}
P^a = \frac{P^f R }{P^f + R},
\qquad 
\bar{x}^a = \frac{\bar{x}^f R + y_0  P^f}{P^f + R},
\end{equation*} 
for a given observation value $y_0$. 

We now demonstrate that this update is equivalent to twice the application
of a Kalman analysis step with $R$ replaced by $2R$. Specifically, we obtain
\begin{equation*}
\hat P^a = \frac{2P_m R }{P_m + 2R}, \qquad 
P_m = \frac{2P^f R }{P^f + 2R},
\end{equation*}
for the resulting covariance matrix $\hat P^a$ with intermediate value
$P_m$. The analyzed mean $\hat{x}^a$ is provided by
\begin{equation*}
\hat{x}^a = \frac{2 \bar{x}_m R + y_0 P_m}{P_m + 2R}, \qquad 
\bar{x}_m = \frac{2 \bar{x}^f R + y_0 P^f}{P^f + 2R} .
\end{equation*}
We need to demonstrate that $P^a = \hat P^a$ and $\bar{x}^a = 
\hat{x}^a$. We start with the covariance matrix and obtain
\begin{align*}
\hat P^a = \frac{\frac{4P^f R}{P^f + 2R} R}{\frac{2P^f R}{P^f + 2R} + 2R}
= \frac{4P^f R^2}{4P^f R + 4R^2} = \frac{P^f R}{P^f + R} = P^a.
\end{align*}
A similar calculation for $\hat{x}^a$ yields
\begin{align*}
\hat{x}^a = \frac{2 \frac{2\bar{x}^f R + y_0 P^f}{P^f + 2R} R
+ y_0 \frac{2P^f R}{P^f + 2R}}{2R + \frac{2P^f R}{P^f + 2R}}
= \frac{4\bar{x}^f R^2 + 4y_0 P^f R}{4R^2 + 4R P^f} = \bar{x}^a .
\end{align*}
Hence, by induction, we can replace the standard Kalman analysis step
by $D>2$ iterative applications of a Kalman analysis with $R$ replaced
by $D R$. We set $P_0 = P^f$, $\bar{x}_0 = \bar{x}^f$, and
iteratively compute $P_{j+1}$ from
\begin{equation*}
P_{j+1} = \frac{D P_j R}{ P_j + D R}, \qquad 
\bar{x}_{j+1} = \frac{D \bar{x}_j R + y_0 P_j}{P_j + D R}
\end{equation*}
for $j=0,\ldots,D-1$. We finally set $P^a = P_D$ and $\bar{x}^a = 
\bar{x}_D$. Next we introduce a step-size $\Delta s = 1/D$ and assume
$D\gg 1$. Then
\begin{align*}
\overline{x}_{j+1} = \frac{\bar{x}_j R + \Delta s y_0 P_j}{R + 
\Delta s P_j} 
= \bar{x}_j - \Delta s P_j R^{-1} \left( \bar{x}_j 
- y_0 \right) + {\cal O}(\Delta s^2)
\end{align*}
as well as
\begin{equation*}
P_{j+1} = \frac{P_j R}{R + \Delta s P_j} =
P_j - \Delta s P_j R^{-1} P_j + {\cal O}(\Delta s^2).
\end{equation*}
Taking the limit $\Delta s \to 0$, we obtain the two differential
equations
\begin{equation*}
\frac{{\rm d}P}{{\rm d} s}  = - P R^{-1} P , \qquad
\frac{{\rm d} \overline{x} }{{\rm d} s}  = -P R^{-1} \left( 
\overline{x} - y_0 \right)
\end{equation*}
for the covariance and mean, respectively. The equation for $P$ can be 
rewritten in terms of its square root $Y$ (\emph{i.e.}~$P = Y^2)$ as 
\begin{equation} \label{eq:squareroot}
\frac{{\rm d} Y}{{\rm d} s}  = -\frac{1}{2} P R^{-1} Y.
\end{equation}
\end{proof}

Upon formally setting $Y = \delta {\rm X}/\sqrt{M-1}$ in
(\ref{eq:squareroot}), the Kalman-Bucy filter
equations give rise to a particular implementation of ensemble square root 
filters in terms of evolution equations in artificial time $s\in [0,1]$.

\begin{Definition*}[Ensemble transform Kalman-Bucy filter equations]
  The \emph{ensemble transform Kalman-Bucy filter} equations
  \cite{sr:br10b,sr:br11,sr:akir11} for the assimilation of an observation $y_0 = y_0(t_n)$ at
 $t_n$ are given by
\[
\frac{{\rm d}x_i}{{\rm d}s} = -\frac{1}{2} P H^{\rm T} R^{-1} (Hx_i + H\bar x - 2y_0(t_n))
\]
in terms of the ensemble members $x_i$, $i=1,\ldots,M$ and are solved
over a unit time interval in artificial time $s \in [0,1]$. Here $P$
denotes the empirical covariance matrix (\ref{eq:ECM}) and $\bar x$
the empirical mean (\ref{eq:EME}) of the ensemble. 
\end{Definition*}

The Kalman-Bucy
equations are realizations of an underlying differential equation
\begin{equation} \label{eq:BR1}
\frac{{\rm d}X}{{\rm d}s} = -\frac{1}{2}P H^{\rm T} R^{-1} (HX + H\bar x - 2y_0(t_n))
\end{equation}
in the random variable $X$ with mean
\[
\bar x = \mathbb{E}_X[x] = \int_\mathbb{R}^N x\pi_X {\rm d}x
\]
and covariance matrix
\[
P = \mathbb{E}_X[(x-\bar x)(x-\bar x)^{\rm T} ].
\]

The associated evolution of the PDF $\pi_X$ (here assumed to be
absolutely continuous) is given by Liouville's equation
\begin{equation} \label{eq:Liouvilles}
\frac{\partial \pi_X}{\partial s} = -\nabla_x \cdot \left( \pi_X v \right)
\end{equation}
with vector field
\begin{equation} \label{eq:Vel}
v(x) = -\frac{1}{2}P H^{\rm T} R^{-1} (Hx + H\bar x - 2y_0(t_n)).
\end{equation}
Recalling the earlier discussion of the Fokker-Planck equation in
Section \eqref{s:sdes}, we note that (\ref{eq:Liouvilles}) with vector field (\ref{eq:Vel}) 
also has an interesting geometric structure.

\begin{Proposition*}[Ensemble transform Kalman-Bucy equations as a
  gradient flow] The vector field (\ref{eq:Vel}) is equivalent to
\[
v(x) = - P \nabla_x  \frac{\delta F}{\delta \pi_X}
\]
with potential
\begin{align} \nonumber
F(\pi_X) &= \frac{1}{4} \int_{\mathbb{R}^N} (Hx-y_0(t_n))^{\rm T} R^{-1}(
Hx - y_0(t_n)) \pi_X {\rm d}x +\\
& \qquad \quad
\frac{1}{4} (H\bar x-y_0(t_n))^{\rm T} R^{-1}(
H\bar x - y_0(t_n))  . \label{eq:KBFunctional}
\end{align}
Liouville's equation (\ref{eq:Liouvilles}) can be stated as
\[
\frac{\partial \pi_X}{\partial s} = -\nabla_x \cdot \left( \pi_X v 
\right) = -\mbox{\rm grad}_{\pi_X} F(\pi_X),
\]
where we have used ${\rm M} = P$ in the definition of the gradient (\ref{gradient}).
\end{Proposition*}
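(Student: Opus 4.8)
The plan is to compute the variational derivative $\delta F/\delta \pi_X$ directly from the defining formula in the Definition of the differential geometric structure, take its spatial gradient, and verify that left-multiplication by $-P$ reproduces the vector field (\ref{eq:Vel}); the gradient-flow reformulation then follows immediately from the earlier Proposition on gradients on the manifold of probability densities, applied with ${\rm M} = P$.

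First I would split the functional (\ref{eq:KBFunctional}) into its two summands and vary each in a direction $\phi \in T_{\pi_X}{\cal M}$, i.e.~with $\int \phi\, {\rm d}x = 0$. The first summand is linear in $\pi_X$, so its variation contributes the integrand $\frac{1}{4}(Hx - y_0(t_n))^{\rm T} R^{-1}(Hx - y_0(t_n))$ to $\delta F/\delta \pi_X$. The crucial point, and the step I expect to be the main obstacle, is that the second summand depends on $\pi_X$ only through the mean $\bar x = \int x\, \pi_X\, {\rm d}x$, which itself varies under $\pi_X \mapsto \pi_X + \epsilon \phi$ as $\bar x \mapsto \bar x + \epsilon \int x\, \phi\, {\rm d}x$. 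Differentiating the quadratic form in $\bar x$ and using symmetry of $R^{-1}$ yields the first-order term $\frac{1}{2}(H\bar x - y_0(t_n))^{\rm T} R^{-1} H \int x\, \phi\, {\rm d}x$, which I would rewrite as $\int \left[\frac{1}{2}(H\bar x - y_0(t_n))^{\rm T} R^{-1} H x\right] \phi\, {\rm d}x$ in order to read off its contribution to $\delta F/\delta \pi_X$. Any additive constant is irrelevant here, since $\int \phi\, {\rm d}x = 0$.

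Collecting both contributions and applying $\nabla_x$, the first term gives $\frac{1}{2} H^{\rm T} R^{-1}(Hx - y_0(t_n))$, and the second, being linear in $x$, gives the $x$-independent vector $\frac{1}{2} H^{\rm T} R^{-1}(H\bar x - y_0(t_n))$. Adding these produces $\nabla_x \frac{\delta F}{\delta \pi_X} = \frac{1}{2} H^{\rm T} R^{-1}(Hx + H\bar x - 2 y_0(t_n))$, so that $-P \nabla_x \frac{\delta F}{\delta \pi_X}$ coincides exactly with the expression (\ref{eq:Vel}), establishing the claimed identity $v(x) = -P \nabla_x \frac{\delta F}{\delta \pi_X}$.

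Finally, to obtain the gradient-flow statement, I would invoke the earlier Proposition with the metric tensor determined by ${\rm M} = P$, which via (\ref{gradient}) gives $\mbox{grad}_{\pi_X} F(\pi_X) = -\nabla_x \cdot\left(\pi_X P \nabla_x \frac{\delta F}{\delta \pi_X}\right)$. Substituting $P \nabla_x \frac{\delta F}{\delta \pi_X} = -v$ turns the right-hand side into $\nabla_x \cdot(\pi_X v)$, whence Liouville's equation (\ref{eq:Liouvilles}) reads $\frac{\partial \pi_X}{\partial s} = -\nabla_x \cdot(\pi_X v) = -\mbox{grad}_{\pi_X} F(\pi_X)$, as asserted.
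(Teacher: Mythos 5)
Your proposal is correct and is precisely the ``direct calculation'' that the paper's proof alludes to without writing out: the linear first summand contributes $\tfrac14 (Hx-y_0)^{\rm T}R^{-1}(Hx-y_0)$ to $\delta F/\delta\pi_X$, the mean-dependent second summand contributes $\tfrac12 (H\bar x-y_0)^{\rm T}R^{-1}Hx$ (modulo an irrelevant constant), and taking $\nabla_x$ and multiplying by $-P$ recovers (\ref{eq:Vel}), after which the gradient-flow identity follows from (\ref{gradient}) with ${\rm M}=P$ exactly as you say. You have correctly identified and handled the only delicate step, namely the implicit dependence of the second term on $\pi_X$ through $\bar x$.
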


\begin{proof} The result can be verified by direct calculation.
\end{proof}

Nonlinear forward operators can be treated in this framework by replacing the potential
(\ref{eq:KBFunctional}) by, for example,
\begin{align*} 
F(\pi_X) &= \frac{1}{4} \int_{\mathbb{R}^N} (h(x)-y_0(t_n))^{\rm T} R^{-1}(
h(x) - y_0(t_n)) \pi_X {\rm d}x +\\
& \qquad \quad
\frac{1}{4} (h(\bar x)-y_0(t_n))^{\rm T} R^{-1}(
H\bar x - y_0(t_n))  . 
\end{align*}
Efficient time-stepping methods for the ensemble transform 
Kalman-Bucy filter equations are discussed in \cite{sr:akir11} and an application
to continuous data assimilation can be found in \cite{sr:br11}.

%%%%%%%%%%%%%%%%%%%%%%%%%%%%%%%%%%%%%%%%%%%%%%%%

\subsection{Guided sequential Monte Carlo methods} \label{sec:IS}

EnKF techniques are limited by the fact that the empirical PDFs do not
converge to the filter solution in the limit of ensemble sizes $M\to
\infty$ unless the involved PDFs are Gaussian. Sequential Monte Carlo
methods, on the other hand, lead to unbiased estimators for the mean
and can be shown to converge under fairly general assumptions, but
they do not work well in high-dimensional phase spaces since
importance sampling is not sufficient to guarantee good performance of a particle
filter for finite ensemble sizes. In particular, the variance in the associated
mean squared error (\ref{eq:MSE}) can be very large for ensemble sizes
typically used in geophysical applications.

The combination of modified particle positions and
appropriately ajdusted particle weights appears therefore as a
promising area for research and might achieve a better bias-variance
tradeoff than either the EnKF or traditional sequential Monte Carlo
methods. In particular, combining ensemble transform
techniques, such as EnKF, with sequential Monte Carlo methods appears as a natural
research direction. Indeed, in the framework of Monte Carlo methods discussed
in Section \ref{sec:MC}, the standard sequential Monte Carlo approach
consists of importance sampling using proposal PDF $\pi_X'(x) = \pi_{n+1}(x|{\rm
  Y}_n)$ and subsequent reweighting of particles according to (\ref{weightsposterior}). As
also already discussed in Section \ref{sec:MC}, the performance of
importance sampling can be improved by applying modified proposal
densities $\pi_{n+1}'(x|{\rm Y}_{n+1})$ with the aim of pushing the
updated ensemble members $x_i(t_{n+1})$ to regions of high and nearly equal probability in the
targeted posterior PDF $\pi_{n+1}(x|{\rm Y}_{n+1})$ (compare with
eq.~(\ref{eq:importantweights})). We call the resulting filter
algorithms \emph{guided sequential Monte Carlo methods}. 

More precisely, a guided sequential Monte Carlo method is defined by a 
conditional proposal PDF $\tilde \pi_{n+1}(x'|x,y_0(t_{n+1}))$ and
an associated joint PDF
\begin{equation} \label{eq:jointproposal}
\tilde \pi_{X'X}(x',x|{\rm Y}_{n+1}) = \tilde \pi_{n+1}(x'|x,y_0(t_{n+1}))\,\pi_n(x|{\rm Y}_n) .
\end{equation}
An ideal proposal density (in the sense of coupling) would lead to a marginal distribution
$\tilde \pi_{X'}(x|{\rm Y}_{n+1})$, which is identical to the posterior PDF $\pi_{n+1}(x|{\rm
  Y}_{n+1})$. In guided sequential Monte Carlo methods, a mismatch
between $\pi_{X'}(x|{\rm Y}_{n+1})$ and $\tilde \pi_{n+1}(x|{\rm Y}_{n+1})$ is treated by
adjusted particle weights $w_i(t_{n+1})$. Following the general methodology of
importance sampling one obtains the recursion
\[
w_i(t_{n+1}) \propto \frac{\pi_Y(y_0(t_{n+1})|x'_i)
  \pi(x_i'|x_i)}{\tilde \pi_{n+1}(x_i'|x_i,y_0(t_{n+1}))} w_i(t_n).
\]
Here $\pi(x'|x)$ denotes the conditional PDF (\ref{eq:CKFP})
describing the model dynamics,
$(x_i',x_i)$, $i=1,\ldots,M$, are realizations from the joint PDF
(\ref{eq:jointproposal}) with weights $w_i(t_n)$, $x_i = x_i(t_n)$, and the approximation 
\begin{align*} 
\mathbb{E}_{X_{n+1}}[g] & = \frac{1}{\pi_Y(y_0(t_{n+1}))} 
\int_{\mathbb{R}^N} \int_{\mathbb{R}^N} f(x',x) \tilde \pi_{X'X}(x',x|{\rm
  Y}_{n+1}) {\rm d}x' {\rm d}x \\
& \approx \frac{1}{\pi_Y(y_0(t_{n+1}))} 
\sum_{i=1}^M w_i(t_n) f(x_i',x_i) \\
& \propto \sum_{i=1}^M w_i(t_{n+1}) g(x_i') 
\end{align*}
with
\[
f(x',x) = g(x') \frac{\pi_Y(y_0(t_{n+1})|x')
  \pi(x'|x)}{\tilde \pi_{n+1}(x'|x,y_0(t_{n+1}))}
\]
has been used. The guided sequential Monte Carlo method is continued
with $x_i(t_{n+1}) = x_i'$ and new weights $w_i(t_{n+1})$.

Numerical
implementations of guided sequential Monte Carlo methods have been
discussed, for example, in
\cite{sr:leeuwen10,sr:bocquet10,sr:chorin10,sr:chorin12}.  More specifically, a
combined particle and Kalman filter  is proposed in
\cite{sr:leeuwen10} to achieve almost equal particle weights (see also
the discussion in \cite{sr:bocquet10}), while in
\cite{sr:chorin10,sr:chorin12}, new particle positions $x_i(t_{n+1})$
are defined by means of implicit equations. We emphasize that both implementation
approaches give up the requirement of unbiased estimation in hope for reduced variance at
finite ensemble sizes and hence for an overall reduction of the
associated mean squared error (\ref{eq:MSE}).

Another broad class of methods is based on Gaussian mixture
approximations to the prior PDF $\pi_{n+1}(x|{\rm Y}_n)$. Provided
that the forward operator $h$ is linear, the posterior PDF
$\pi_{n+1}(x|{\rm Y}_{n+1})$ is then also a Gaussian mixture and
several procedures have been proposed to adjust the proposals
$x_i^f(t_{n+1})$ such that the adjusted $x_i(t_{n+1})$ approximately
follow the posterior Gaussian mixture PDF. See, for example,
\cite{sr:smith07,sr:frei11,sr:stordal11}. Broadly speaking, these
methods can be understood as providing approximate transport maps
$T_{n+1}'$ instead of an exact transport map $T_{n+1}$ in
(\ref{eq:RR}). However, none of these methods avoid the need for
particle reweighting and resampling. Recall that resampling can be
implemented such that it corresponds to a non-deterministic optimal
transference plan.

The following section is devoted to an embedding technique for
constructing accurate approximations to the transport map $T_{n+1}$ in
(\ref{eq:RR}).

%%%%%%%%%%%%%%%%%%%%%%%%%%%%%%%%%%%%%%%%%%%

\subsection{Continuous ensemble transform filter formulations}

The implementation of (\ref{eq:RR}) requires the computation of a
transport map $T$. Optimal transportation (\emph{i.e.}, maximising the
covariance of the transference plan), leads to $T = \nabla_x \psi$ and
the potential satisfies the highly nonlinear, elliptic Monge-Ampere
equation
\[
\pi_{X_2}(\nabla_x\psi)|D\nabla_x \psi|  = \pi_{X_1} .
\]
A direct numerical implementation for high-dimensional state spaces
${\cal X} = \mathbb{R}^N$ seems at present out of reach. Instead, in
this section we utilize an embedding method due to Moser
\cite{sr:Moser65}, replacing the optimal transport map by a suboptimal
transport map which is defined as the time-one
flow map of a differential equation in artificial time $s\in[0,1]$. At each time
instant, determining the right hand side of the differential equation requires the
solution of a linear elliptic PDE; nonlinearity is exchanged for
linearity at the cost of suboptimality. In some cases, such as
Gaussian PDFs and mixtures of Gaussian, the linear PDE can be solved
analytically. In other cases, further approximations, such as a mean
field approach discussed later in this section, are necessary.

Inspired by the embedding method of Moser \cite{sr:Moser65}, we first
summarize a dynamical systems formulation \cite{sr:reich10} of Bayes'
formula which generalizes the continuous EnKF formulation from Section
\ref{sec:KB}. We first note that a single application of Bayes'
formula (\ref{eq:Bayes}) can be replaced by an $D$-fold recursive
application of the incremental likelihood $\widehat{\pi}$:
\begin{equation} \label{product}
\widehat \pi (y| x) = \frac{1}{(2\pi)^{K/2} |R|^{1/2}} \exp \left(-\frac{1}{2D} \left(
h(x) - y\right)^{\rm T} R^{-1} \left(
h(x) - y\right) \right) ,
\end{equation}
i.e., we first write Bayes formula as
\begin{equation*}
\pi_X (x|y_0) \propto \pi_X(x)\,\prod_{j=1}^D \widehat \pi ( y_0| x),
\end{equation*}
where the constant of proportionality depends only on $y_0$,
and then consider the implied iteration
\begin{equation*}
\pi_{j+1}(x) = \frac{\pi_j(x)\,\widehat \pi ( y_0 | x)}{
\int_\mathbb{R}^N {\rm d} x\, \pi_j(x)\,\widehat \pi ( y_0| x)}
\end{equation*}
with $\pi_0 = \pi_X$ and $\pi_X(\cdot |y_0) = \pi_D$. We may now
expand the exponential function in (\ref{product}) in the small
parameter $\Delta s = 1/D$, in the limit $D\to \infty$ obtaining  the
evolution equation
\begin{equation} \label{alphaODE}
\frac{\partial \pi}{\partial s} = -\frac{1}{2} \left(
h(x) - y_0 \right)^{\rm T} R^{-1} \left(
h(x) - y_0 \right) \pi + \mu \pi
\end{equation}
in the fictitious time $s \in [0,1]$. The scalar Lagrange multiplier $\mu$ is 
equal to the expectation value of the negative log likelihood function 
\begin{equation} \label{loglike}
L( x; y_0) = \frac{1}{2} \left(
h(x) - y_0 \right)^{\rm T} R^{-1} \left(
h(x)- y_0 \right)
\end{equation}
with respect to $\pi$ and ensures that $\int_{\mathbb{R}^N} (\partial
\pi/\partial s)  {\rm d} x  = 0$. 
We also set $\pi(x,0) = \pi_X( x)$ and obtain $\pi_X(x|y_0) = \pi(x,1)$.

We now rewrite (\ref{alphaODE}) in the equivalent, but more compact,
form
\begin{equation} \label{alphaODEII}
\frac{\partial \pi}{\partial s} = -\pi \left( L - \bar{L}\right), \qquad
\mbox{where } \bar{L}= 
 \mathbb{E}_X [L].
\end{equation}
Here $\mathbb{E}_X$ denotes expectation with respect to the PDF $\pi_X
= \pi(\cdot,s)$. It should be noted that the continuous embedding
defined by (\ref{alphaODEII}) is not unique. Moser \cite{sr:Moser65},
for example, used the linear interpolation
\[
\pi(x,s) = (1-s) \pi_X(x) + s\pi_X(x|y_0) ,
\]
which results in
\begin{equation} \label{alphaMoser}
\frac{\partial \pi}{\partial s} = \pi_X(x|y_0) - \pi_X(x).
\end{equation}
Yet another interpolation is given by the displacement interpolation
of McCann which is based on the optimal transportation map and which
has an attractive ``fluid dynamics'' interpretation
\cite{sr:Villani,sr:Villani2}.

Eq.~(\ref{alphaODEII}) (or, alternatively, (\ref{alphaMoser})) defines
the change (or transport) of the PDF $\pi$ in fictitious time $s \in
[0,1]$. Alternatively, following Moser's work
\cite{sr:Moser65,sr:Villani}, we can view this change as being induced
by a continuity (Liouville) equation
\begin{equation} \label{optimaltransportation1}
\frac{\partial \pi}{\partial s} = -\nabla_x \cdot \left(\pi g  \right)
\end{equation}
for an appropriate vector field $g( x,s) \in \mathbb{R}^N$. 

At any time $s \in [0,1]$ the vector field $g(\cdot,s)$ is not
uniquely determined by (\ref{alphaODEII}) and
(\ref{optimaltransportation1}) unless we also require that it is the
minimizer of the kinetic energy
\begin{equation*}
{\cal T}(v) = \frac{1}{2} \int_{\mathbb{R}^N} \pi   v^{\rm T} {\rm M}^{-1} v\,{\rm d}x
\end{equation*}
over all admissible vector fields $v:\mathbb{R}^N \to \mathbb{R}^N$
(\emph{i.e.} $g$ satisfies (\ref{optimaltransportation1}) for given
$\pi$ and $\partial \pi/\partial s$), where ${\rm M} \in \mathbb{R}^{N
  \times N}$ is a positive definite matrix.  Under these
assumptions, minimization of the functional
\begin{equation*}
{\cal L}[v,\phi] = \frac{1}{2} \int_{\mathbb{R}^N} \pi   v^{\rm T} {\rm M}^{-1} v\,{\rm
  d}x + \int_{\mathbb{R}^N} \phi \left\{ 
\frac{\partial \pi}{\partial s} + \nabla_{\bf x} \cdot \left(\pi v  \right) \right\}{\rm d}x
\end{equation*}
for given $\pi$ and $\partial \pi/\partial s$ leads to the
Euler-Lagrange equations
\begin{equation*}
\pi {\rm M}^{-1} g - \pi \nabla_x \psi = 0, \qquad 
\frac{\partial \pi}{\partial s} + \nabla_x \cdot \left(\pi g  \right) = 0
\end{equation*}
in the velocity field $g$ and the potential $\psi$. Hence, provided
that $\pi >0$, the desired vector field is given by $g = {\rm M}
\nabla_x \psi$, and we have shown the following result.

\begin{Proposition*}[Transport map from gradient flow]
If the potential $\psi(x,s)$ is the solution of the elliptic PDE
\begin{equation} \label{optimaltransportation2}
\nabla_x \cdot \left( \pi_X {\rm M} \nabla_x \psi \right) = \pi_X \left( L - \bar L \right),
\end{equation}
then the desired transport map $x' = T(x)$ for the random variable $X$
with PDF $\pi_X(x,s)$ is defined by the time-one-flow map of the
differential equations
\[
\frac{{\rm d}x}{{\rm d}s} = -{\rm M} \nabla_x \psi .
\]
The continous Kalman-Bucy filter equations correspond to the special
case ${\rm M} = P$ and $\psi = \delta F/\delta \pi_X$ with the
functional $F$ given by (\ref{eq:KBFunctional}).
\end{Proposition*}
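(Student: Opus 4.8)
The statement asserts that the minimizing vector field in the Moser-type embedding is $g = \mathrm{M}\nabla_x\psi$, where $\psi$ solves the elliptic equation

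$$\nabla_x \cdot (\pi_X \mathrm{M}\nabla_x\psi) = \pi_X(L - \bar L),$$

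and that the transport map $T$ is the time-one flow of $dx/ds = -\mathrm{M}\nabla_x\psi$. Let me sketch how I'd prove this.

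The general part of the statement is almost entirely contained in the variational computation preceding it, which already produced the Euler--Lagrange relation $g = {\rm M}\nabla_x\psi$ for the minimal-energy velocity field together with the continuity constraint \eqref{optimaltransportation1}. The plan is to close this into the elliptic PDE and the flow statement, and then to treat the Kalman--Bucy line as a direct verification. First I would substitute $g = {\rm M}\nabla_x\psi$ into \eqref{optimaltransportation1}, giving $\partial_s\pi = -\nabla_x\cdot(\pi_X{\rm M}\nabla_x\psi)$, and equate this with the prescribed incremental-Bayes rate \eqref{alphaODEII}, namely $\partial_s\pi = -\pi_X(L-\bar L)$. Cancelling the common minus sign then produces exactly $\nabla_x\cdot(\pi_X{\rm M}\nabla_x\psi) = \pi_X(L-\bar L)$, i.e.\ \eqref{optimaltransportation2}, so that $\psi$ is pinned down (up to an irrelevant additive constant) as the unique solution of this linear elliptic problem whenever $\pi_X>0$.

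Second, I would invoke the elementary fact that a density obeying a continuity equation $\partial_s\pi = -\nabla_x\cdot(\pi_X g)$ is the push-forward of its initial value along the characteristic flow $\dot x = g(x,s)$. Integrating this flow over the artificial time interval $s\in[0,1]$ therefore yields a map $T$ that transports the prior $\pi_X(\cdot,0)$ onto the posterior $\pi_X(\cdot,1)$, which is precisely the deterministic coupling sought in \eqref{eq:RR}. The time-one flow map of the stated differential equation is thus the required transport map, and the only delicate point is the orientation of the Lagrange multiplier $\psi$: the kinetic-energy minimization fixes $g$ only relative to the sign convention chosen for the multiplier, so I would fix that orientation once and for all by matching against the Kalman--Bucy vector field rather than reading it off the variational equation in isolation.

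For the Kalman--Bucy specialization I would substitute ${\rm M}=P$ and $\psi=\delta F/\delta\pi_X$ with $F$ as in \eqref{eq:KBFunctional} and perform the functional differentiation. Splitting $F$ into its $\pi_X$-linear part $\tfrac12\int L\,\pi_X\,{\rm d}x$ and its mean-dependent part $\tfrac12 L(\bar x)$, and differentiating the latter through $\bar x=\int x\,\pi_X\,{\rm d}x$, gives $\nabla_x(\delta F/\delta\pi_X)=\tfrac12 H^{\rm T}R^{-1}(Hx+H\bar x-2y_0)$, so that $-P\nabla_x\psi$ reproduces the field \eqref{eq:Vel} and hence the random-variable equation \eqref{eq:BR1}. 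Compatibility with the gradient formula \eqref{gradient} and with Liouville's equation \eqref{eq:Liouvilles} then follows by inspection, confirming that the general construction collapses to the continuous Kalman--Bucy filter in the Gaussian, linear-observation case.

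The step I expect to be the real obstacle is not any single calculation but the global bookkeeping of signs and normalizations: one must reconcile the sign of the multiplier emerging from the energy minimization with the functional-derivative convention $\psi=\delta F/\delta\pi_X$, ensure that the source term of \eqref{optimaltransportation2} and the direction of $\dot x=-{\rm M}\nabla_x\psi$ are mutually consistent with the forward evolution \eqref{alphaODEII}, and check that $T$ transports $\pi_X$ forward rather than backward. Once the orientation is fixed in one place---most cleanly by demanding that $-P\nabla_x\psi$ equal \eqref{eq:Vel}---every remaining sign is forced and the verification reduces to routine differentiation.
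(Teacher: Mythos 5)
Your proposal is correct and follows essentially the same route as the paper, whose proof of this proposition consists precisely of the preceding kinetic-energy minimization (yielding the Euler--Lagrange relation $g = {\rm M}\nabla_x\psi$ together with the continuity constraint) plus the Kalman--Bucy gradient-flow proposition of Section \ref{sec:KB}, which the paper itself justifies only ``by direct calculation'' and which you carry out explicitly. Your emphasis on the sign bookkeeping is well placed: as literally stated, the flow ${\rm d}x/{\rm d}s = -{\rm M}\nabla_x\psi$ combined with \eqref{optimaltransportation2} would yield $\partial\pi/\partial s = +\pi(L-\bar L)$, opposite to \eqref{alphaODEII}, so one sign in the proposition must indeed be fixed by matching the Kalman--Bucy vector field \eqref{eq:Vel}, exactly as you propose.
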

The elliptic PDE (\ref{optimaltransportation2}) can be solved
analytically for Gaussian approximations to the PDF $\pi_X$ and the
resulting differential equations are equivalent to the ensemble
transform Kalman-Bucy equations (\ref{eq:BR1}). Appropriate analytic
expressions can also be found in case where $\pi_X$ can be
approximated by a Gaussian mixture and the forward operator $h(x)$ is
linear (see \cite{sr:reich11} for details). 

Gaussian mixtures are contained in the class of \emph{kernel
  smoothers}.  It should however be noted that approximating a PDF
$\pi_X$ over high-dimensional phase spaces ${\cal X} = \mathbb{R}^N$
using kernel smoothers is a challenging task, especially if only a relatively small
number of realizations $x_i$, $i=1,\ldots,M$, from the associated
random variable $X$ are available.

In order to overcome this curse of dimensionality, we outline a
modification to the above continuous formulation, which is inspired by
the rank histogram filter of Anderson \cite{sr:anderson10}.  For
simplicity of exposition, consider a single observation $y \in
\mathbb{R}$ with forward operator $h:\mathbb{R}^N \to \mathbb{R}$. We
augment the state vector $x\in \mathbb{R}^N$ by $y = h(x)$, \emph{i.e.}~we
consider $(x,y)$ and introduce the associated joint PDF
\[
\pi_{XY} (x,y) = \pi_X(x|y) \pi_Y(y) .
\] 
We apply the embedding technique first to $y$ alone resulting in
\[
\frac{{\rm d}y}{{\rm d}s} = f_y(y,s)
\]
with
\[
\partial_y (\pi_Y(y) f_y(y)) = \pi_Y(y) (L-\bar L) .
\]
One then finds an equation in the state variable $x \in \mathbb{R}^N$ from
\[
\nabla_x \cdot (\pi_X(x|y) f_x(x,y,s)) + f_y(y,s) \partial_y \pi_X(x|y)  = 0
\]
and
\[
\frac{{\rm d}x}{{\rm d}s} = f_x(x,y,s).
\]
Next we introduce the \emph{mean field approximation}
\begin{equation} \label{eq:meanfield}
\pi_1(x^1|y) \pi_2(x^2|y) \cdots \pi_N(x^N|y)
\end{equation}
for the conditional PDF $\pi_X(x|y)$ with the components of the state vector written as 
$x = (x^1,x^2,\ldots,x^N)^T \in \mathbb{R}^N$. Under the mean field approximation the vector
field $f_x = (f_{x^1},f_{x^2},\ldots,f_{x^N})^T$ can be obtained
component-wise by solving scalar equations
\begin{equation} \label{eq:meanfield2}
\partial_{z} (\pi_k(z|y) f_{x^k}(z,y)) +  f_y(y) \,\partial_y \pi_k(z|y) = 0,
\end{equation}
$k=1,\ldots,N$, for $f_{x^k}(z,y)$ with $z = x^k \in \mathbb{R}$. The (two-dimensional) 
conditional PDFs $\pi_k(x^k|y)$ need to be estimated from the available ensemble members 
$x_i \in \mathbb{R}^N$ by either using parametric or non-parametric statistics.  

We first discuss the case for which both the prior and the
posterior distributions are assumed to be Gaussian.
In this case, the resulting update equations in $x \in \mathbb{R}^N$
become equivalent to the ensemble transform
Kalman-Bucy filter. This can be seen by first noting that
the update in a scalar observable $y \in \mathbb{R}$ is
\[
\frac{{\rm d}y}{{\rm d}s} = -\frac{1}{2} \sigma_{yy}^2 R^{-1} (y+\bar y - 2y_0).
\]
Furthermore, if the condition PDF $\pi_k (z|y)$, $z=x^k \in \mathbb{R}$, 
is of the form (\ref{Gcond1}), then (\ref{eq:meanfield2}) leads to
\[
f_{x^k}(x^k,y) = \sigma_{xy}^2 \sigma_{yy}^{-2} f_y(y),
\]
which, combined with the approximation (\ref{eq:meanfield}), results in the continuous ensemble transform 
Kalman-Bucy filter formulation discussed previously. 

The rank histogram filter of Anderson \cite{sr:anderson10} corresponds in this continuous embedding 
formulation to choosing a general PDF $\pi_Y(y)$ while a Gaussian approximation is used for the 
conditional PDFs $\pi_k(x^k|y)$.

Other ensemble transform filters can  be derived by using appropriate approximations to the marginal
PDF $\pi_Y$ and the conditional PDFs $\pi_k(x^k|y)$, $k=1,\ldots,N$, from the available ensemble 
members $x_i$,  $i=1,\ldots,M$.

%%%%%%%%%%%%%%%%%%%%%%%%%%%%%%%%%%%%%%%%%%%%%%%%%%%%%%%%%%%%

\subsection*{References} 
An excellent introduction to filtering and Bayesian data assimilation is \cite{sr:jazwinski}. 
The linear filter theory (Kalman filter) can, for
example, be found in \cite{sr:simon}.
Fundamental issues of data assimilation in a meteorological context
are covered in \cite{sr:kalnay}.
Ensemble filter techniques and the ensemble Kalman filter are treated
in depth in \cite{sr:evensen}.  Sequential Monte Carlo methods are discussed in
\cite{sr:Doucet,sr:crisan,sr:arul02} and by
\cite{sr:leeuwen,sr:bocquet10} in a geophysical context. See also the recent
monograph \cite{sr:majda}. The transport view has been proposed in
\cite{sr:crisan10} for continuous filter problems and in
\cite{sr:reich10} for intermittent data assimilation.
Gaussian mixtures are a special class of non-parametric kernel smoothing 
techniques which are discussed, for example, in \cite{sr:Wand}.

%%%%%%%%%%%%%%%%%%%%%%%%%%%%%%%%%%%%

\section{Concluding remarks}

We have summarized the Bayesian perspective on sequential data
assimilation and filtering in particular. Special emphasize has been
put on discussing Bayes' formula in the context of coupling of random
variables, which allows for a dynamical system's interpretation of the
data assimilation step.  Within a Bayesian framework all variables are
treated as random. While this implies an elegant mathematical
treatment of data assimilation problems, any Bayesian approach should
be treated with caution in the presence of sparse data,
high-dimensional model problems, and limited sample sizes. It should
be noted in this context that successful assimilation techniques such
as 4DVar (not covered in this survey) and the EnKF lead to biased
approximations to the state estimation problem. In both cases the bias
is due to the fact that the algorithms are derived under the
assumption that the prior distributions are Gaussian. Nevertheless
4DVar and EnKF work often well in terms of the observed mean squared
error (\ref{eq:MSE}) since the variance of the estimator remains small
even for relatively small ensemble sizes $M$. On the contrary,
asymptotically unbiased Bayesian approaches such as sequential Monte
Carlo methods suffer from the curse of dimensionality, lead generally
to large variances in the estimators for small $M$ and have therefore
not yet found systematic applications in operational forecasting, for
example. To overcome this limitation, one could consider more suitable
proposal steps such as guided sequential Monte Carlo methods and/or
impose certain independence assumptions such as mean field
approximations which lead to an improved balance between bias and
variance in the mean squared error (\ref{eq:MSE}). See also the
discussion of \cite{sr:hastie} on the bias-variance tradeoff in the
context of supervised learning. Promising results for guided particle
filters have been reported very recently in
\cite{sr:chorin12b,sr:leeuwen12}.  Alternatively, non-Bayesian
approaches to data assimilation could be explored in the future such
as: (i) shadowing for partially observed reference solutions, (ii) a
nonlinear control approach with transport maps as dynamic feedback
laws, (iii) derivation and analysis of ensemble filter techniques
within the framework of stochastic interacting particle systems.
 
%\include{chapter02}

% Please choose one of the following options:
% VERSION A is for BIBTeX application. You have to
% add the names of the BST file and the databases, e.g.:
\bibliographystyle{plain}
\bibliography{survey}

% VERSION B is the LaTeX standard bibliography environment:
%\begin{thebibliography}{99}
%  \bibitem{Argyros:2007} S. A. Argyros and P. Dodos,
%    \emph{Genericity and amalgamation of classes of  Banach spaces},
%    Adv. Math. 209 (2007), pp.~666--748.
%\end{thebibliography}

% Specify the content of the index by using \index commands. Then run
% makeindex main.tex -s degruyter-mono.ist
% to create the index.
%\printindex

\end{document}